\theoremstyle{plain}
\newtheorem{lemma}{Lemma}[section]
\newtheorem{definition}[lemma]{Definition}
\newtheorem{proposition}[lemma]{Proposition}
\newtheorem{corollary}[lemma]{Corollary}
\newtheorem{remark}[lemma]{Remark}
\newtheorem{example}[lemma]{Example}
\DeclareMathOperator{\Ric}{Ric}
\newcommand{\Lie}[1]{\operatorname{\textsl{#1}}}
\newcommand{\lie}[1]{\operatorname{\mathfrak{#1}}}
\newcommand{\un}{\lie{u}}
\newcommand{\Gtwo}{\ifmmode{{\rm G}_2}\else{${\rm G}_2$}\fi}
\newcommand{\Nt}{\nabla^{\Lie{U}(n)}}
\newcommand{\Div}{\textup{div}}
 \newcommand{\cyclic}{\mathop{\kern0.9ex{{+}\kern-2.2ex\raise-.28ex\hbox{\Large\hbox
 {$\circlearrowright$}}}}}
\newcommand{\real}[1]{\left\llbracket #1 \right\rrbracket}
\newcommand{\inp}[2]{\langle #1, #2\rangle}
\newcommand{\alt}{\mathbf a}
\newcommand{\lcf}{\lbrack\!\lbrack}
\newcommand{\rcf}{\rbrack\!\rbrack}
\newcommand{\Ricac}{\Ric^{*}} 
\def\sideremark#1{\ifvmode\leavevmode\fi\vadjust{\vbox to0pt{\vss
 \hbox to 0pt{\hskip\hsize\hskip1em
 \vbox{\hsize2.5cm\tiny\raggedright\pretolerance10000
 \noindent #1\hfill}\hss}\vbox to8pt{\vfil}\vss}}}%
\newfont{\eusm}{eusm10 scaled \magstep1}
\newfont{\eusmiii}{eusm10 scaled \magstep3}
\title{The exterior derivative of the Lee form of  almost Hermitian manifolds}
\author{Francisco~Mart\'\i n~Cabrera}
\address[Francisco~Mart\'\i n~Cabrera]{Departamento de Matemáticas, Estadística e Investigación  Ope\-ra\-ti\-va \\
  University of La Laguna\\ 38200 La Laguna, Tenerife, Spain}
\email{fmartin@ull.edu.es}
\newcommand{\m}{P}
\renewcommand\le{\leqslant}
\newcommand{\ZZ}[1]{\al#1\ar^\perp\kern-8pt\lower2pt\hbox{$_\cZ$}}
\newcommand{\EE}[1]{\lceil#1\kern-0.5pt\rfloor}
\newcommand{\cZ}{\mathcal{Z}}
\newcommand{\dm}{d\kern1pt\raise.8pt\hbox{$^\m$}}
\newcommand{\E}{\raise1pt\hbox{$\textstyle\bigwedge$}\kern-0.5pt}
\newcommand{\ba}{\begin{array}}\newcommand{\ea}{\end{array}}
\newcommand{\be}{\begin{equation}}
\newcommand{\ee}[1]{\label{#1}\end{equation}}
\newcommand{\bt}{\begin{tabular}}\newcommand{\et}{\end{tabular}}
\newcommand{\al}{\langle}\newcommand{\ar}{\rangle}
\begin{document}

\maketitle

\begin{center}
{\footnotesize To the memory of Thomas Friedrich}
\end{center}

\begin{abstract}
  The exterior derivative $d \theta$ of the Lee form $\theta$ of almost Hermitian manifolds is studied. If $\omega$ is the Kähler two-form, it is proved that  the $\mathbb{R}\omega$-component of  $d\theta$ is always zero. Expressions    for the  other components, in $[\lambda_0^{1,1}]$ and in $\lcf \lambda^{2,0}\rcf$,  of $d\theta$   are also obtained.  They are given  in terms  of the intrinsic torsion. Likewise,  
  it is   described some   interrelations between the Lee form and $\Lie{U}(n)$-components of the 
  Riemannian 
  curvature tensor. 
\end{abstract}
\vspace{3mm}

{\footnotesize
\keywords{Keywords: almost Hermitian, $G$-structure, intrinsic torsion, minimal connection, Lee-form,  Ricci curvature}

\keywords{MSC2000: 53C15; 53C10, 53C05}

}
\vspace{4mm}

\section{Introduction} 

In \cite{Gray-H:16} Gray and Hervella  displayed a  classification for almost Hermitian structures. Such a classification is based on the  decomposition of the space   possible intrinsic torsions into irreducible $\Lie{U}(n)$-modules. Since they obtained   a decomposition into four irreducible modules $\mathcal{W}_i$ for $n>2$,  $i=1,\dots ,4$,   there are  $2^{4}$ classes of almost structure denoted by  direct sums of  $\mathcal{W}_i$ determined for the non-zero components $\xi_{(i)}$ of the intrinsic torsion $\xi$. The component $\xi_{(4)}$ is  determined by a one-form $\theta$, usually called the \textit{Lee form} in references \cite{Lee}.

The exterior derivative $d\theta$ of the Lee form $\theta$ amounts interest because such a form is related with conformal changes of the almost Hermitian metric. For instance, if $\theta$ is closed,  at least locally, it is possible to do  a conformal change of metric such that  $\xi_{(4)}=0$ for the new almost Hermitian structure. In the particular case that $\xi_{(1)}=0$, $\xi_{(3)}=0$ and $n>2$, i.e. the almost Hermitian structure is of type $\mathcal{W}_2 \oplus \mathcal{W}_4$, it is not hard to deduce that the Lee form $\theta$ must be  closed.  From this, a natural question arises: are there another types of almost Hermitian structures with $\theta \neq 0$ such that $\theta$ is necessarily closed for them?. The interest of this question is increased by the fact that the most  examples in references with $\theta \neq  0$  are such that $\theta$ is closed. 
On the other hand, if the exterior derivative $d \theta$ of $\theta$ is non-zero, under action of $\Lie{U}(n)$, $d \theta$ is decomposed into $\Lie{U}(n)$-components, when do  some of them vanish?.

The initial purpose of the present text is to obtain answers for the mentioned questions. Thus  we will give expressions for the   $\Lie{U}(n)$-components, $(d \theta)_{\mathbb{R} \omega}$, $(d \theta)_{[\lambda^{1,1}_0]}$ and $(d \theta)_{\lcf \lambda^{2,0} \rcf}$,   of the exterior derivative  $d \theta$  in terms of the intrinsic torsion.  It is relevant that  it is  given a  proof for the fact  that the component  $(d \theta)_{\mathbb{R} \omega}$, proportional to the Kähler two-form $\omega$, always vanishes (see Proposition \ref{divergenciaunouno}).
The identity     $(d \theta)_{\mathbb{R} \omega} =0$ has already  been obtained by Gauduchon in \cite{Gau}. He gave  another  proof in the context  of Hermitian structures (type $\mathcal{W}_3 \oplus \mathcal{W}_4$) which is also valid for general almost Hermitian structures.

 In order to obtain  expressions for $(d \theta)_{\mathbb{R} \omega}$, $(d \theta)_{[\lambda^{1,1}_0]}$ and $(d \theta)_{\lcf \lambda^{2,0} \rcf}$,  we will make use of some interrelations among components of the intrinsic torsion which are consequences of the identity  $d^2 \omega =0$. 
 Such interrelations are interesting by their own. 
 For instance, one of them  has been used in \cite{Martin3} to explain  the behavior of certain components of the Riemannian curvature.  Likewise some of these interrelations have been applied   in the study of harmonic $\Lie{U}(n)$-structures  (see \cite{GDMC}). 
Finally, we point out that  the expressions obtained for   $(d \theta)_{[\lambda^{1,1}_0]}$ and $(d \theta)_{\lcf \lambda^{2,0} \rcf}$ (see Proposition \ref{divergenciaunouno})   allow us to say when some of them are zero. For instance, if the almost Hermitian manifold is of type $\mathcal{W}_1 \oplus \mathcal{W}_4$, then $(d \theta)_{[\lambda^{1,1}_0]}$ vanishes. Moreover, if the manifold is of dimension six, then $\theta$ is closed for such a type 
      (see Proposition \ref{w1w4}).

The Riemannian curvature tensor of almost Hermitian manifolds has been studied by 
 Tricerri an Vanhecke  \cite{Tricerri-Vanhecke:aH}, Falcitelli, Farinola and Salamon  
\cite{Falcitelli-FS:aH} and by  Swann and the present author   \cite{Martin3}. Section 4 could  be considered as an application of  the expressions for the  $\Lie{U}(n)$-components   of $d \theta$ to the study of such a curvature tensor. Thus those expressions  provide a better understanding of facts and properties   shown in the mentioned references. For instance, it is explained the behavior of certain $\Lie{U}(n)$-components of the Riemannian curvature tensor determined  by means of  the usual Ricci tensor $\Ric$ and  another Ricci type  tensor $\Ricac$ given in \cite{Martin3}. Likewise, some results given in  \cite{Falcitelli-FS:aH} are completed.

In the last section, some examples are studied to show that the two
components of $d \theta$ orthogonal to $\omega$,  $(d \theta)_{[\lambda^{1,1}_0]}$ and $(d \theta)_{\lcf \lambda^{2,0} \rcf}$,  can be   non-zero. The first and second examples are  four-dimensional manifolds  equipped with a Hermitian structure for which
both of these components are non-zero. The third example, which is also
Hermitian and has previously been studied in some details be Abbena et al.
\cite{AbbGarSal}, again has both  components  non-zero.
In these examples  computations relative to the Riemannian curvature are also  done. This illustrates the study of facts and properties displayed in Section 4. 

Finally, we   point out that it is still  an open question to find an example 
 of  almost Hermitian manifold of type $\mathcal{W}_1 \oplus \mathcal{W}_4$ 
such that $d \theta \neq 0$, i.e.   $(d \theta)_{\lcf \lambda^{2,0} \rcf} \neq 0$. Note that such an example ought to   be of  dimension greater than six.

\section{Preliminaries}{\indent}
\label{sect:prelima} \setcounter{equation}{0}
An \textit{ almost complex} structure  on a manifold $M$ consists of a $(1,1)$-tensor $J$ such that
$
J^{2} = -I. 
$
The manifold $M$ must be of dimension $2n$. The presence of an almost complex structure is equivalent to say that there is a $\Lie{GL}(n, \mathbb{C}) $-structure defined  on $M$. 
 A manifold $M$ is said to be   \textit{almost Hermitian}, if there is an almost complex structure and a   Riemannian metric 
$\langle \cdot, \cdot \rangle$  defined on $M$ such that they  satisfy   the compatibility condition 
$\langle J
X, JY\rangle = \langle X,Y\rangle$. 
In this case it is said that there is a $\Lie{U}(n)$-structure  on $M$.

Associated with an  almost Hermitian  structure,   the tensor $\omega = \langle \cdot, J
\cdot\rangle$, called the {\em Kähler form}, is usually
considered. Using $\omega$, $M$ can be oriented by fixing a
constant multiple of $\omega^n$ as volume form. Under the action of 
$\Lie{U}(n)$, the cotangent space on each point
$\mbox{T}^{*}_m M$ is  irreducible  and  it follows 
$
\lie{so}(2n) \cong \Lambda^{2} \mbox{T}^* M = \lie{u}(n) \oplus \lie{u}(n)^{\perp} $, where
$\lie{u}(n)$ ($\lie{u}(n)^{\perp} $)
consists of those two-forms $b$ such that $b(J X , J Y)
= b( X  ,  Y )$ ($b(JX , JY) = - b( X , Y)$). 
 
Denoting by $\nabla$ the Levi Civita connection,  the {\it minimal
connection} $\nabla^{\Lie{U}(n)}$  is  the unique $\Lie{U}(n)$-connection on $M$ such that  $\xi = \nabla^{\Lie{U}(n)} -\nabla$  satisfies the condition $\xi \in \mbox{T}^* M \otimes  \lie{u}(n)^{\perp}$. The tensor  $\xi$ is referred to as the {\it intrinsic
torsion} of the almost Hermitian  structure 
\cite{CleytonSwann:torsion}.  In \cite{Gray-H:16}, Gray and Hervella showed that in general dimensions  the space $\mbox{T}^* M \otimes \lie{u}(n)^{\perp}$
of possible intrinsic torsions    is
decomposed into four  irreducible $\Lie{U}(n)$-modules 
providing a classification of $2^4=16$ classes or types of almost Hermitian structures. To be more precise,  the space of possible intrinsic torsions
  $\mbox{T}^* M\otimes  \lie{u}(n)^{\perp}$ consists of those tensors
$\xi$ such that
$  J \xi_X Y + \xi_X J Y = 0$
and, under the action of $U(n)$, is decomposed into:
\begin{enumerate}
\item if $n=2$, $
\xi \in \mbox{T}^* M \otimes \un(2)^\perp = \mathcal
W_{2} \oplus \mathcal W_{4}$;
\item if $n \geqslant 3$, $ \xi \in \mbox{T}^* M
\otimes \un(n)^\perp =
  \mathcal W_{1}  \oplus   \mathcal W_{2}  \oplus  \mathcal W_{3}  \oplus  \mathcal W_{4}$.
\end{enumerate}
The intrinsic  torsion is explicitly determined   by
 $ \xi_X   =   -  \tfrac12  J \circ \nabla_X J
 =  \tfrac12 (\nabla_X J) \circ J. $ 
 The tensor $\xi_{(i)}$ will denote the  component of 
$\xi$ corresponding to $\mathcal{W}_i$. 
The component $\xi_{(4)}$  is determined   by    the vector field $\sum_{i=1}^{2n} \xi_{e_i} 
 e_i$, where $\{e_1, \dots , e_{2n} \}$ is a local  orthonormal frame field. Other expression for this vector field is 
  $
 2  \textstyle   \sum_{i=1}^{2n}   \xi_{e_i} e_i  =  - J (d^* \omega )^{\sharp},
$
where $d^*$ denotes coderivative and $\langle  (d^* \omega )^{\sharp}, X \rangle = d^*\omega(X)$. Since the \textit{Lee form}, considered in \cite{Lee, Gray-H:16}, is defined by $\theta = -\frac{1}{n-1}  J d^* \omega $, one has 
$
\textstyle \sum_{i=1}^{2n}   (\xi_{e_i} e_i)^\flat = \tfrac{n-1}{2} \theta,  
$
where $(\xi_{e_i} e_i)^\flat(X) = \langle \xi_{e_i} e_i , X \rangle$.  We point out  that, for a one-form $a$, $(J a)(X) := - a (JX)$. The component $\xi_{(4)}$ is explicitly given by  
$$
4 \xi_{(4)X} = X^\flat \otimes \theta^\sharp - \theta \otimes X - J X^\flat \otimes J\theta^\sharp + J \theta \otimes JX.  
$$
\begin{remark}[Notations and conventions]
  {\rm For using simpler and standard notation,  we recall that $\lambda_0^{p,q}$ is a complex irreducible $\Lie{U}(n)$-module coming from the $(p,q)$-part of the  
complex exterior algebra, and that its corresponding dominant weight in standard
coordinates is given by $(1, \dots,1,0, \dots,0, -1, \dots , -1)$, where $1$ and $-1$ are repeated $p$ and $q$ times, respectively. By analogy with the exterior algebra, there
are also complex irreducible $\Lie{U}(n)$-modules $\sigma^{p,q}_0$, with dominant weights $(p,0, \dots ,0, -q)$ 
coming from the complex symmetric algebra. The notation $\lcf V \rcf$ stands for the real vector space underlying a complex vector space $V$, and $[W ]$ denotes a real vector space that admits $W$ as its complexification. Thus for the $\Lie{U}(n)$-modules above mentioned one has
\begin{gather*}
 \mbox{T}^* M  \cong \lcf \lambda^{1,0} \rcf \cong \mathcal W_4, \quad  \lie{u}(n)  \cong  [\lambda^{1,1}], \quad  \lie{su}(n) \cong    [\lambda^{1,1}_0] ,  \quad \lie{u}(n)^{\perp} \cong  \lcf \lambda^{2,0} \rcf, 
 \\
\quad \mathcal W_1   \cong \lcf \lambda^{3,0} \rcf, \quad  \mathcal W_2   \cong \lcf A \rcf, \qquad \mathcal W_3   \cong \lcf \lambda^{2,1}_0 \rcf,
\end{gather*}
where $A \subset \lambda^{1,0} \otimes \lambda^{2,0}$.

We will use the natural extension  to forms of the metric $\langle \cdot , \cdot \rangle$. Thus, for all $p$-forms $\alpha$, $\beta$,  
 $$
 \langle \alpha , \beta \rangle = \tfrac{1}{p!} \textstyle  \sum_{i_1, \dots, i_p=1}^{2n} \alpha (e_{i_1} , \dots , e_{i_p})
\beta (e_{i_1} , \dots , e_{i_p}).
$$ 
For instance, using this product, for a two-form $\alpha$,  one has  $\alpha_{\mathbb{R}\omega} = \frac1{n} \langle \alpha , \omega \rangle \omega$. Another example using this product is the identity $J d^* \omega = - \langle \cdot \lrcorner d \omega, \omega \rangle$, where $\lrcorner$ denotes the interior product.
 
  In the sequel, we will consider the orthonormal basis for tangent vectors   $\{e_1, \dots, e_{2n}\}$. Likewise, we will use the summation convention to simplify notation. The repeated indexes will mean that the sum is extended from $i=1$ to $i=2n$.  Otherwise, the sum will be explicitly  written. 
 We also point out that  we will make reiterated use of the \textit{musical isomorphisms} $\flat  :  \mathrm{T} M \to  \mathrm{T}^* M$
 and $\sharp : \mathrm{T}^* M \to \mathrm{T} M$, induced by  $\langle \cdot , \cdot \rangle$, defined  by $X^\flat = \langle X, \cdot  \rangle$  and $\langle a^\sharp , \cdot \rangle  = a$.  Finally,  if $\psi$ is a $(0,s)$-tensor,  we write
\begin{gather*} \label{conv:A}
  J_{(i)}\psi(X_1, \dots, X_i, \dots , X_s) = - \psi(X_1, \dots , JX_i, \dots ,
  X_s).
\end{gather*}
   }
  \end{remark}

\section{The components of the exterior derivative of the Lee form}
In this section we will display  several identities relating components of the intrinsic torsion which are consequences of the equalities $d^2 \omega=0$. They are  interesting  by their own and  we will show some applications of them.
   For instance, from such   identities we will obtain expressions for  the $\Lie{U}(n)$-components of the exterior derivative of the Lee form.   
 \begin{lemma} \label{lambdaunouno}
  For an almost Hermitian  manifold of dimension $2n$, $n>1$, the following identities are  satisfied:
 \begin{align*}
   0 = & \; \langle \nabla^{\Lie{U}(n)}_{e_j}  \xi_{(4)e_i} e_i , J e_j \rangle,\\
   0   = & 
   \;
   - \tfrac{n-2}{n-1} \textstyle   \langle \nabla^{\Lie{U}(n)}_{  X} \xi_{(4)e_i} { e_i},    Y \rangle
   +  \tfrac{n-2}{n-1} \textstyle   \langle \nabla^{\Lie{U}(n)}_{ Y} \xi_{(4) e_i} { e_i},    X \rangle
   - 2 \textstyle  \langle (\nabla^{\Lie{U}(n)}_{e_i} \xi_{(3)})_{ X} Y,  { e_i} \rangle
    \\
    &
    + 2 \textstyle  \langle (\nabla^{\Lie{U}(n)}_{e_i} \xi_{(3)})_{ Y}   X, { e_i} \rangle
  - \tfrac{n-2}{n-1} \textstyle  \langle \nabla^{\Lie{U}(n)}_{J  X} \xi_{(4)e_i} { e_i}, J  Y \rangle
   +  \tfrac{n-2}{n-1} \textstyle   \langle \nabla^{\Lie{U}(n)}_{J Y} \xi_{(4)e_i} { e_i}, J X \rangle
\\
&
- 3 \textstyle  \langle \xi_{(1)X} e_i, \xi_{(2)Y} e_i \rangle 
+ 3  \textstyle  \langle \xi_{(1)Y} e_i, \xi_{(2)X} e_i \rangle,   
\\  
     0 = &
     \;
      3 \textstyle  \langle (\nabla^{\Lie{U}(n)}_{e_i} \xi_{(1)} )_{e_i} X_{}, Y_{} \rangle
   - \textstyle  \langle (\nabla^{\Lie{U}(n)}_{e_i} \xi_{(3)} )_{e_i} X_{}, Y_{} \rangle
   + (n-2) \textstyle   \langle (\nabla^{\Lie{U}(n)}_{e_i} \xi_{(4)} )_{e_i} X_{},Y_{}
   \rangle
  \\
 &
  - \textstyle   \langle \xi_{{(3)}X_{}} e_i,  \xi_{{(1)} Y_{}} e_i \rangle
  + \textstyle   \langle  \xi_{{(3)}Y_{}} e_i,  \xi_{{(1)}  X_{} }e_i 
  +  \tfrac12\textstyle   \langle  \xi_{{(3)}X_{}} e_i,  \xi_{{(2)} Y_{}} e_i \rangle
   - \textstyle   \frac12  \langle  \xi_{{(3)}Y_{}} e_i,  \xi_{{(2)}  X_{}} e_i \rangle
      \\
 &
 \textstyle   - \tfrac{n-5}{n-1}  \textstyle  \langle \xi_{{(1)} \xi_{{(4)}e_i} e_i }X_{}, Y_{} \rangle
    - \tfrac{n-2}{n-1}  \textstyle  \langle  \xi_{{(2)} \xi_{{(4)}e_i} e_i } X_{}, Y_{} \rangle
      +  \textstyle   \langle \xi_{{(3)} \xi_{(4)e_i} e_i } X_{}, Y_{} \rangle.
      \end{align*}
 \end{lemma}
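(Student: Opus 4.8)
The plan is to extract all three identities from the single relation $d^2\omega=0$, read off in three different $\Lie{U}(n)$-equivariant contractions. The first step is to rewrite $d\omega$ through the minimal connection. Since $\nabla^{\Lie{U}(n)}\omega=0$ and $\nabla=\nabla^{\Lie{U}(n)}-\xi$, one has $\nabla_X\omega=-\xi_X\cdot\omega$, so that the $3$-form $\beta:=d\omega$ is the alternation of $-\xi\cdot\omega$ and depends algebraically only on $\xi_{(1)},\xi_{(3)},\xi_{(4)}$; the module $\mathcal{W}_2$ does not contribute to $d\omega$. This displays $\beta$ explicitly in terms of the intrinsic torsion.

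Next I would compute $d\beta$ by means of the first-order formula for $d$ attached to the metric connection $\nabla^{\Lie{U}(n)}$, whose torsion is $T^{\Lie{U}(n)}(X,Y)=\xi_X Y-\xi_Y X$. For a $3$-form this reads
\[
 d\beta(X_0,\ldots,X_3)=\sum_{i}(-1)^{i}\bigl(\nabla^{\Lie{U}(n)}_{X_i}\beta\bigr)(\ldots)+\sum_{i<j}(-1)^{i+j}\beta\bigl(T^{\Lie{U}(n)}(X_i,X_j),\ldots\bigr),
\]
where on the right the omitted arguments are the remaining $X$'s. Because $\nabla^{\Lie{U}(n)}\omega=0$, the first sum is linear in $\nabla^{\Lie{U}(n)}\xi$ (and, as above, free of $\xi_{(2)}$), while the second sum is $\beta$ contracted with $T^{\Lie{U}(n)}$ and hence quadratic in $\xi$. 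Imposing $d\beta=d^2\omega=0$ therefore yields a single master $4$-form identity of the schematic shape $0=(\text{linear in }\nabla^{\Lie{U}(n)}\xi)+(\text{quadratic in }\xi)$, whose quadratic part involves $\xi_{(2)}$ only through the torsion coupling.

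I would then extract the three stated identities by contracting this master $4$-form once with $\omega$, e.g. $\Phi(X,Y)=\sum_i d^2\omega(e_i,Je_i,X,Y)$, producing a $2$-form $\Phi$ that must vanish identically. Splitting $\Phi$ into its $\Lie{U}(n)$-irreducible summands $\mathbb{R}\omega\oplus[\lambda^{1,1}_0]\oplus\lcf\lambda^{2,0}\rcf$ gives the three components of the lemma: the $\mathbb{R}\omega$-part is the scalar first identity, in which the trace $\xi_{(4)e_i}e_i=\tfrac{n-1}{2}\theta^\sharp$ isolates the $\xi_{(4)}$-contribution $\langle\nabla^{\Lie{U}(n)}_{e_j}\xi_{(4)e_i}e_i,Je_j\rangle=0$ (the infinitesimal form of $(d\theta)_{\mathbb{R}\omega}=0$), while the two remaining parts are the second and third identities. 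The $J$-symmetrizations appearing in the statements implement the projectors $b^{\pm}(X,Y)=\tfrac12\bigl(b(X,Y)\pm b(JX,JY)\bigr)$ onto the respective summands, and the coefficients $\tfrac{n-2}{n-1}$, $\tfrac{n-5}{n-1}$ emerge once the $\xi_{(4)}$-traces are re-expressed through $\theta$ and combined with the projection constants.

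The main obstacle is the $\Lie{U}(n)$-representation bookkeeping of the quadratic terms. After each projection one must decide, using the explicit irreducible decomposition of $\mathrm{T}^*M\otimes\un(n)^\perp$ together with Schur's lemma, which products $\xi_{(i)}\ast\xi_{(j)}$ survive and with what weight: the cross-terms $\langle\xi_{(1)X}e_i,\xi_{(2)Y}e_i\rangle$ in the second identity, and the products $\langle\xi_{(3)X}e_i,\xi_{(1)Y}e_i\rangle$, $\langle\xi_{(3)X}e_i,\xi_{(2)Y}e_i\rangle$ together with the action terms $\langle\xi_{(3)\xi_{(4)e_i}e_i}X,Y\rangle$ in the third, must be tracked precisely, since their coefficients are exactly what distinguish the identities. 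Closing the argument then reduces to the routine but lengthy alternation algebra needed to match these coefficients.
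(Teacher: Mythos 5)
Your proposal follows essentially the same route as the paper: both derive all three identities from $d^2\omega=0$ written in terms of $\nabla^{\Lie{U}(n)}$ and $\xi$ (your first-order formula for $d$ with the torsion $\xi_XY-\xi_YX$ of the minimal connection is just a repackaging of the paper's substitution $\nabla=\nabla^{\Lie{U}(n)}-\xi$ into the alternation), followed by contraction with $\omega$ and projection onto $\mathbb{R}\omega\oplus[\lambda^{1,1}_0]\oplus\lcf\lambda^{2,0}\rcf$. The plan is correct, including the observation that the $\mathbb{R}\omega$-part reduces to the first identity once the cross-term $\langle\xi_{(1)e_j}e_i,\xi_{(2)e_j}Je_i\rangle$ is killed by $\Lie{U}(n)$-orthogonality, which is exactly the paper's argument.
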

 \begin{remark}
 {\rm The  third identity of Lemma \ref{lambdaunouno} has  already been shown in \cite{Martin3}. There it was used to explain some aspects of the behavior of the Riemannian curvature tensor.}
 \end{remark}
\begin{proof}
Consider the K\"ahler form~$\omega$.  Being a differential form it
satisfies $d^2\omega=0$.  However, since the Levi-Civita
connection~$\nabla$ is torsion-free, we may compute $d^2\omega$ using
$\nabla$.  Writing $\nabla=\nabla^{\Lie{U}(n)}-\xi$ and using $\nabla^{\Lie{U}(n)}\omega=0$, we have first
that
\begin{equation*}
  \tfrac12d\omega(Y,Z,W) = \langle \xi_YZ, JW\rangle  + \langle \xi_WY, JZ \rangle  +
  \langle \xi_ZW, JY \rangle.
\end{equation*}
Now $d^2\omega = \alt(\nabla^{\Lie{U}(n)}\omega)- \alt(\xi d\omega)$, where $\alt\colon
T^*M\otimes \Lambda^3T^*M\to\Lambda^4T^*M$~is the alternation map.  One
computes that these two terms are the expressions obtained respectively by
summing $\varepsilon \inp{(\Nt_X\xi)_YZ}{JW}$ and $\varepsilon
\inp{\xi_{\xi_XY}Z}{JW}$ over all permutations of $(X,Y,Z,W)$, where
$\varepsilon$~is the sign of the permutation. After doing all of this we obtain
\begin{align}
 d^2 \omega (X_1,X_2,X_3,X_4)  = & \textstyle \sum_{1 \leq a < b \leq 4} (-1)^{a+b} \left( \left((\nabla^{\Lie{U}(n)}_{X_a} \xi)_{X_b} - (\nabla^{\Lie{U}(n)}_{X_b} \xi)_{X_a}\right) \omega \right) (X_c, X_d) \nonumber
   \\
& 
\textstyle + \sum_{1 \leq a < b \leq 4} (-1)^{a+b}(\xi_{\xi_{X_a} X_b - \xi_{X_b} X_a}  \omega) (X_c, X_d) \label{d2F}
\\
&
\textstyle - \sum_{1 \leq a < b \leq 4} (-1)^{a+b}([\xi_{X_a} ,\xi_{X_b}]  \omega) (X_c, X_d), \nonumber
  \end{align}
 where $c < d$, $ \{c,d\} =\{1, \dots,4\}-\{a,b\}$ in each case and $[\xi_{X_a} , \xi_{X_b} ] = \xi_{X_a}  \xi_{X_b} - \xi_{X_b}  \xi_{X_a}$.

We have that
\begin{equation*}
  \Lambda^4T^*M =
  \lcf \lambda^{4,0} \rcf +\lcf \lambda^{3,1} \rcf+\lcf \lambda^{2,0} \rcf \omega +
  [\lambda^{2,2}_0]+[\lambda^{1,1}_0]\omega+\mathbb R\omega^2,
\end{equation*}
so in order to compute the  components in $\left[\lambda^{1,1}\right] = \mathbb{R} + [\lambda^{1,1}_0 ]$ and $\real{\lambda^{2,0}}$   of $d^2\omega$, 
we contract with $\omega$ on the first two arguments. Then we take the
corresponding projections to $\left[\lambda^{1,1}\right]$ and  $\lcf \lambda^{2,0} \rcf$, which are respectively the   $1$-eigenspace and  $(-1)$-eigenspace of $J$
acting on $2$-forms.   Using the symmetries of the components of~$\xi$,  one
obtains  the components in $[\lambda^{1,1}]$ and   $\lcf \lambda^{2,0}\rcf$ of $d^2\omega$  written in terms of $\nabla^{\Lie{U}(n)}$ and $\xi$. Such components   vanish because $d^2\omega=0$.  
For the  first identity, we  do a contraction with $\omega$ on the  component in $[\lambda^{1,1} ]$. In this way we will obtain the equality 
$$
 0=  8 \langle \nabla^{\Lie{U}(n)}_{e_j}  \xi_{(4)e_i} e_i , J e_j \rangle - 3 (n-1)\langle \xi_{(1)e_j} e_i , \xi_{(2)e_j} J e_i \rangle. 
$$
Since $\xi_{(2)} \circ J$ is still in $\mathcal{W}_2$, $\xi_{(2)} \circ J$ is orthogonal to  $\xi_{(1)}$. Hence $\langle \xi_{(1)e_j} e_i , \xi_{(2)e_j} J e_i \rangle=0$ and the first identity follows.
Taking this into account  in  the $[\lambda^{1,1}]$-component, we will obtain the second identity. 
Finally,  the third identity follows by considering   the component in $\lcf \lambda^{2,0}\rcf$. 
  \end{proof}
  \begin{remark}{\rm 
  It is well known that the respective  curvature tensors $R$ and $R^{\Lie{U}(n)}$ of the connections $\nabla$ and $\nabla^{\Lie{U}(n)}$ are related by 
  $$
R{(X,Y)} = R^{\Lie{U}(n)}{(X,Y)} + (\nabla^{\Lie{U}(n)}_{X} \xi)_Y - (\nabla^{\Lie{U}(n)}_{Y}
\xi)_X + \xi_{ \xi_X Y}  - \xi_{\xi_YX} - [ \xi_X ,
\xi_Y ]
$$
(see \cite{CleytonSwann:torsion}). Using this identity in \eqref{d2F}, it is obtained
$$
0 =\sum_{1\leq a < b \leq 4} (-1)^{a+b}   \left(  \left( R(X_a , X_b) - R^{\Lie{U}(n)}(X_a, X_b) \right) \omega \right) ( X_c, X_{d}). 
$$
where $c<d$, $\{c,d\}=\{1, \dots ,4\} - \{a,b\}$ in each summand. We stress that $\nabla^{\Lie{U}(n)}\omega=0$ is a key fact here.  }
  \end{remark}
\vspace{2mm}

Next we  note that 
\begin{gather*}
 \textstyle    (\nabla^{\Lie{U}(n)}_X (\xi_{(4)e_i} { e_i})^\flat) (Y) - (\nabla^{\Lie{U}(n)}_Y (\xi_{(4)e_i} { e_i})^\flat) (X)  = 
\textstyle  d  (\xi_{(4)e_i} { e_i})^\flat)(X,Y) -  \langle \xi_X Y - \xi_Y X , \xi_{(4)e_i} { e_i} \rangle, \nonumber\\
\textstyle (n-1)   \langle (\nabla^{\Lie{U}(n)}_{e_i} \xi_{(4)} )_{e_i} X_{},Y_{} \rangle  
  =
  \textstyle ( d (\xi_{(4)e_i} e_i)^\flat)_{\lcf \lambda^{2,0} \rcf} (X,Y) - 2  \langle \xi_{(1) \xi_{(4)e_i} e_i} X, Y \rangle 
  +   \textstyle  \langle \xi_{(2) \xi_{(4)e_i} e_i} X, Y \rangle \nonumber  
  \end{gather*}
(for an explicit proof for this second identity, see Lemma 4.4 in \cite{GDMC}). Using these identities in previous Lemma, we will obtain the components of the exterior derivative of the Lee form $\theta$. 
 \begin{proposition} \label{divergenciaunouno}
 For almost Hermitian manifolds of dimension $2n$, $n>1$, the following identities are satisfied:
 \begin{align*}
   \textstyle   (d \theta)_{\mathbb{R} \omega}  = & \;0,
   \\
    \tfrac{n-2}{2} \textstyle  (d \theta )_{[\lambda^{1,1}_0]} ( X,  Y)  
=&
   - \textstyle  \langle (\nabla^{\Lie{U}(n)}_{e_i} \xi_{(3)})_{ X} Y,  { e_i} \rangle
    + \textstyle   \langle (\nabla^{\Lie{U}(n)}_{e_i} \xi_{(3)})_{ Y}   X, { e_i} \rangle
  +  \tfrac{n-2}{2} \textstyle  \langle \xi_{(3)  X} Y- \xi_{(3)  Y} X, \theta^\sharp \rangle
  \\
  & 
  - \tfrac{3}{2} \textstyle  \langle \xi_{(1)X} e_i, \xi_{(2)Y} e_i \rangle 
  + \tfrac{3}{2} \textstyle  \langle \xi_{(1)Y} e_i, \xi_{(2)X} e_i \rangle,  
  \\   
    \tfrac{n-2}{2} \textstyle  (d \theta )_{\lcf \lambda^{2,0} \rcf} (X,Y)  
    = &
   -   3 \textstyle  \langle (\nabla^{\Lie{U}(n)}_{e_i} \xi_{(1)} )_{e_i} X_{}, Y_{} \rangle
   + \textstyle  \langle (\nabla^{\Lie{U}(n)}_{e_i} \xi_{(3)} )_{e_i} X_{}, Y_{} \rangle
  + \textstyle   \langle \xi_{{(3)}X_{}} e_i,  \xi_{{(1)} Y_{}} e_i \rangle
 \\
 &
  - \textstyle   \langle  \xi_{{(3)}Y_{}} e_i,  \xi_{{(1)}  X_{} }e_i \rangle
  -  \tfrac12\textstyle   \langle  \xi_{{(3)}X_{}} e_i,  \xi_{{(2)} Y_{}} e_i \rangle
   + \textstyle  \tfrac12    \langle  \xi_{{(3)}Y_{}} e_i,  \xi_{{(2)}  X_{}} e_i \rangle
    \\
   &
 \textstyle   + \tfrac{3 (n-3)}{2}  \textstyle  \langle \xi_{{(1)} \theta^\sharp }X_{}, Y_{} \rangle
      - \tfrac{n-1}{2} \textstyle   \langle \xi_{{(3)} \theta^\sharp } X_{}, Y_{} \rangle. 
  \end{align*}
 \end{proposition}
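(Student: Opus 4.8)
The plan is to deduce all three formulas from Lemma \ref{lambdaunouno} by recognizing the Lee form as the metric dual of the $\mathcal{W}_4$-trace. Since $\sum_i (\xi_{e_i} e_i)^\flat = \tfrac{n-1}{2}\theta$ and only the component $\xi_{(4)}$ contributes to this trace, one has $\xi_{(4)e_i} e_i = \tfrac{n-1}{2}\theta^\sharp$, whence $d\theta = \tfrac{2}{n-1}\, d(\xi_{(4)e_i}e_i)^\flat$. Consequently each $\Lie{U}(n)$-component of $d\theta$ is $\tfrac{2}{n-1}$ times the corresponding component of $d(\xi_{(4)e_i}e_i)^\flat$, and I would compute the latter using the two identities displayed immediately before the statement (call them the bridge identities), which express the antisymmetrized $\nabla^{\Lie{U}(n)}$ of $(\xi_{(4)e_i}e_i)^\flat$ in terms of $d(\xi_{(4)e_i}e_i)^\flat$ and quadratic torsion.

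For the $\mathbb{R}\omega$-component I would contract $d(\xi_{(4)e_i}e_i)^\flat$ with $\omega$, so that $(d\theta)_{\mathbb{R}\omega} = \tfrac1n \langle d\theta, \omega\rangle\,\omega$. Applying the first bridge identity, the $\nabla^{\Lie{U}(n)}$-part becomes, after contraction, exactly $-\langle\nabla^{\Lie{U}(n)}_{e_j}\xi_{(4)e_i}e_i, Je_j\rangle$, which vanishes by the first identity of Lemma \ref{lambdaunouno}. The remaining torsion term $\langle \xi_X Y - \xi_Y X, \xi_{(4)e_i}e_i\rangle$, once contracted with $\omega$ and simplified with $\xi_X JY = -J\xi_X Y$ and $\sum_i \xi_{e_i}e_i = \tfrac{n-1}{2}\theta^\sharp$, reduces to a multiple of $\langle J\theta^\sharp, \theta^\sharp\rangle = 0$. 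Hence $\langle d\theta, \omega\rangle = 0$ and $(d\theta)_{\mathbb{R}\omega} = 0$.

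For the two remaining components I would feed the bridge identities into the second and third identities of Lemma \ref{lambdaunouno}. In the second identity the four $\nabla^{\Lie{U}(n)}\xi_{(4)}$-terms assemble into $-\tfrac{2(n-2)}{n-1}$ times the $J$-invariant projection of the antisymmetrized $\nabla^{\Lie{U}(n)}(\xi_{(4)e_i}e_i)^\flat$; replacing this, via the first bridge identity, by the $[\lambda^{1,1}_0]$-projection of $\tfrac{n-1}{2}d\theta$ minus the corresponding torsion correction, and solving, gives the $[\lambda^{1,1}_0]$-formula. Here I would use that in the $J$-invariant, trace-free projection only the $\xi_{(3)}$-part of $\langle\xi_X Y - \xi_Y X, \theta^\sharp\rangle$ survives, since the $\mathcal{W}_1$- and $\mathcal{W}_2$-contributions lie in $\lcf\lambda^{2,0}\rcf$ while the $\mathcal{W}_4$-contribution is symmetric in $X,Y$ and is annihilated by the antisymmetrization. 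For the $\lcf\lambda^{2,0}\rcf$-formula I would substitute the second bridge identity for the term $(n-2)\langle(\nabla^{\Lie{U}(n)}_{e_i}\xi_{(4)})_{e_i}X, Y\rangle$ in the third identity, again using $\xi_{(4)e_i}e_i = \tfrac{n-1}{2}\theta^\sharp$; the $\xi_{(2)\theta^\sharp}$-terms then cancel against the $\xi_{(2)\xi_{(4)e_i}e_i}$-term already present, the $\xi_{(1)\theta^\sharp}$-terms combine with the $\xi_{(1)\xi_{(4)e_i}e_i}$-term into coefficient $\tfrac{3(n-3)}{2}$, and solving yields the last formula.

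The hard part will be the bookkeeping of the $\Lie{U}(n)$-irreducible projections rather than any single calculation: one must identify precisely which quadratic torsion terms ($\xi_{(i)}\!\cdot\!\xi_{(j)}$ and $\xi_{(i)\theta^\sharp}$) survive in each of $\mathbb{R}\omega$, $[\lambda^{1,1}_0]$ and $\lcf\lambda^{2,0}\rcf$, and track the $n$-dependent coefficients through both the contraction with $\omega$ and the two bridge identities. The cancellation of the $\xi_{(2)\theta^\sharp}$-terms and the coalescence of the $\xi_{(1)\theta^\sharp}$-terms into $\tfrac{3(n-3)}{2}$ are the delicate checks; everything else is routine once the reduction $d\theta = \tfrac{2}{n-1}\,d(\xi_{(4)e_i}e_i)^\flat$ is in place.
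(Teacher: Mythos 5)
Your proposal is correct and follows essentially the same route as the paper: the paper likewise obtains the Proposition by substituting the two displayed ``bridge'' identities (together with $\xi_{(4)e_i}e_i=\tfrac{n-1}{2}\theta^\sharp$) into the three identities of Lemma \ref{lambdaunouno}, and your coefficient bookkeeping (the cancellation of the $\xi_{(2)\theta^\sharp}$-terms and the $-\,(n-2)-\tfrac{n-5}{2}=-\tfrac{3(n-3)}{2}$ coalescence) checks out.
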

 \begin{remark}
{\rm  The identity     $(d \theta)_{\mathbb{R} \omega} =0$ has already  been obtained by Gauduchon in \cite{Gau} in the context of Hermitian structures (type $\mathcal{W}_3 \oplus \mathcal{W}_4$). However, his proof is valid in the general context of almost Hermitian structures. }
 \end{remark}
 As a consequence of the expressions for the components of $d\theta$ given in Proposition \ref{divergenciaunouno}, we have the following proposition whose part (i) is already well known.
 \begin{proposition} \label{w1w4}
For almost Hermitian manifolds of dimension $2n$ with $n>2$,  we have:
 \begin{enumerate}
\item[$\mathrm{(i)}$] If the structure is of type $ \mathcal{W}_2 \oplus \mathcal{W}_4 \cong \real{ A}\oplus \real{ \lambda^{1,0}} $, then the Lee form $\theta$ is closed.
\item[$\mathrm{(ii)}$] If the structure is of type $\mathcal{W}_1 \oplus \mathcal{W}_4 \cong \lcf  \lambda^{3,0} \rcf  \oplus  \lcf \lambda^{1,0}\rcf $,  then  $(d \theta )_{[\lambda^{1,1}_0]}$  vanishes. In particular, if $n=3$, $d\theta=0$ for such a type. Moreover, if $\xi_{(1)} =0$ on some point, then $\xi_{(1)} =0$ on the  whole corresponding connected component, i.e. the structure is of the type $\mathcal{W}_4$ called locally conformal Kähler structure.  
If $\xi_{(1)} \neq 0$, the Lee form is given by $\theta = d \ln \frac1{\| \xi_{(1)} \|^2}$ on the connected component. Thus, in this second case, the structure of type   $\mathcal{W}_1 \oplus \mathcal{W}_4$  is globally  conformal to the type $\mathcal{W}_1$ and it makes sense to say that we have a globally conformal  nearly Kähler structure.  
 \end{enumerate}
 \end{proposition}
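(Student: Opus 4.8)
The plan is to read the two vanishing assertions directly off Proposition~\ref{divergenciaunouno}, and then to reduce the remaining (dimension-six) assertions to a single first-order identity for $\|\xi_{(1)}\|^2$. A structure of type $\mathcal{W}_2\oplus\mathcal{W}_4$ has $\xi_{(1)}=\xi_{(3)}=0$; inspecting the three formulas of Proposition~\ref{divergenciaunouno}, every term of $(d\theta)_{[\lambda^{1,1}_0]}$ and of $(d\theta)_{\lcf\lambda^{2,0}\rcf}$ carries a factor $\xi_{(1)}$ or $\xi_{(3)}$, so together with $(d\theta)_{\mathbb{R}\w}=0$ this gives $d\theta=0$, proving (i). A structure of type $\mathcal{W}_1\oplus\mathcal{W}_4$ has $\xi_{(2)}=\xi_{(3)}=0$; now every term of the $[\lambda^{1,1}_0]$-formula carries a factor $\xi_{(2)}$ or $\xi_{(3)}$, so $(d\theta)_{[\lambda^{1,1}_0]}=0$, while the $\lcf\lambda^{2,0}\rcf$-formula collapses to
\begin{equation*}
\tfrac{n-2}{2}\,(d\theta)_{\lcf\lambda^{2,0}\rcf}(X,Y)=-3\,\langle(\Nt_{e_i}\xi_{(1)})_{e_i}X,Y\rangle+\tfrac{3(n-3)}{2}\,\langle\xi_{(1)\theta^\sharp}X,Y\rangle .
\end{equation*}

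Next I would dispose of the remaining assertions, all for $n=3$, by establishing the single global identity
\begin{equation*}
d\|\xi_{(1)}\|^2=-\|\xi_{(1)}\|^2\,\theta
\end{equation*}
on a connected component. Granting it, everything follows at once: on the open set where $\xi_{(1)}\ne0$ it reads $\theta=d\ln\frac{1}{\|\xi_{(1)}\|^2}$, so $\theta$ is exact and $d\theta=0$ (this is the last formula of the proposition and the conformal statement); the scalar $g=\|\xi_{(1)}\|^2$ solves the linear first-order equation $dg=-g\,\theta$, so it is either nowhere or everywhere zero on the component, which is the dichotomy in (ii); and where $g\equiv0$ the structure is of type $\mathcal{W}_4\subset\mathcal{W}_2\oplus\mathcal{W}_4$, whose Lee form is closed by part (i). Hence $d\theta=0$ throughout, and for $n=3$ the proposition is reduced to the displayed identity.

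To prove that identity I would use the feature that makes dimension six special: for $n=3$ one has $\mathcal{W}_1\cong\lcf\lambda^{3,0}\rcf$, the realification of the canonical complex line bundle, so $\xi_{(1)}$ corresponds to a real three-form $\psi=\re\,\sigma$ with $\sigma$ of type $(3,0)$, and on $\{\xi_{(1)}\ne0\}$ the $\Lie{U}(3)$-parallel complex structure $\mathcal{J}$ of this rank-two bundle lets one write $\Nt_X\xi_{(1)}=\alpha(X)\,\xi_{(1)}+\beta(X)\,\mathcal{J}\xi_{(1)}$, where necessarily $\alpha=\tfrac12\,d\ln\|\xi_{(1)}\|^2$. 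Since $V\mapsto(\xi_{(1)})_V$ is a pointwise isomorphism $\mathrm{T}M\to\lie{u}(3)^\perp\cong\lcf\lambda^{2,0}\rcf$ (where $\xi_{(1)}\ne0$) and $\iota_{JV}\re\,\sigma=-\iota_V\im\,\sigma$, the divergence contracts to $\langle(\Nt_{e_i}\xi_{(1)})_{e_i}X,Y\rangle=\langle(\xi_{(1)})_{\alpha^\sharp+J\beta^\sharp}X,Y\rangle$; with the collapsed formula above (and $n=3$, so the $\xi_{(1)\theta^\sharp}$-term drops) this identifies $(d\theta)_{\lcf\lambda^{2,0}\rcf}$, under the isomorphism, with $-6\,(\alpha+J\beta)$. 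The missing input is a second relation among $\alpha,\beta,\theta$: writing $d\omega=\psi+c\,\theta\we\w$ and differentiating, $d^2\omega=0$ gives $d\psi=c\,(\theta\we\psi-d\theta\we\w)$; expanding $d\psi=\alt(\Nt\psi)-\alt(\xi\psi)=\alpha\we\psi+\beta\we\mathcal{J}\psi-\alt(\xi\psi)$ and projecting onto the $\lcf\lambda^{2,0}\rcf\w$-summand of $\Lambda^4\mathrm{T}^*M$, I expect a linear equation that, combined with the previous one, forces $\beta=J\alpha$ and $\alpha=-\tfrac12\theta$; the first gives $(d\theta)_{\lcf\lambda^{2,0}\rcf}=0$ and the second is exactly the identity $d\|\xi_{(1)}\|^2=-\|\xi_{(1)}\|^2\theta$. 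The main obstacle is precisely this last step — carrying out the $\lcf\lambda^{2,0}\rcf\w$-projection of $d^2\omega=0$, including the quadratic torsion term $\alt(\xi\psi)$ with $\xi=\xi_{(1)}+\xi_{(4)}$, and checking that the resulting system pins $\alpha$ and $\beta$ down as claimed. This is where dimension six is genuinely used, through $\lambda^{3,1}=0$ (so that $\Lambda^{3,1}=\w\we\Lambda^{2,0}$) and the line-bundle splitting of $\Nt\xi_{(1)}$, and it is the only place the argument goes beyond substitution into Proposition~\ref{divergenciaunouno}.
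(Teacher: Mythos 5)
Your treatment of (i) and of the vanishing of $(d\theta)_{[\lambda^{1,1}_0]}$ in (ii) is correct and is exactly the paper's argument: substitution of $\xi_{(1)}=\xi_{(3)}=0$ (resp.\ $\xi_{(2)}=\xi_{(3)}=0$) into Proposition~\ref{divergenciaunouno}. Your reduction of the remaining claims to the single identity $d\|\xi_{(1)}\|^2=-\|\xi_{(1)}\|^2\theta$, including the ODE-uniqueness dichotomy on a connected component, is also sound, and your observation that $\alpha=\tfrac12 d\ln\|\xi_{(1)}\|^2$ in the decomposition $\Nt_X\xi_{(1)}=\alpha(X)\xi_{(1)}+\beta(X)\mathcal J\xi_{(1)}$ is correct.

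The gap is in the step you yourself flag as the main obstacle, and it is not merely a computation left undone: the two relations you propose to combine are not independent. Your first relation is the collapsed $\lcf\lambda^{2,0}\rcf$-formula of Proposition~\ref{divergenciaunouno}, which is precisely the $\lcf\lambda^{2,0}\rcf\omega$-summand of $d^2\omega=0$ (the third identity of Lemma~\ref{lambdaunouno}), rewritten via the divergence of $\xi_{(1)}$. Your proposed second relation is the $\lcf\lambda^{2,0}\rcf\omega$-projection of the \emph{same} four-form identity $d^2\omega=0$, merely re-expressed through $\alt(\Nt\psi)=\alpha\we\psi+\beta\we\mathcal J\psi$ instead of through $(\Nt_{e_i}\xi_{(1)})_{e_i}$. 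Since for $n=3$ one has $\lcf\lambda^{3,1}_0\rcf=0$, contraction with $\omega$ followed by projection to $\lcf\lambda^{2,0}\rcf$ captures exactly that summand and nothing else; and both re-expressions factor through the same combination $\alpha+J\beta$ (a one-form's worth of data) because $\gamma\we\psi$ and $\xi_{(1)\gamma^\sharp}$ each see only the $(0,1)$-part of $\alpha+i\beta$. So your ``system'' is one equation written twice: it identifies $(d\theta)_{\lcf\lambda^{2,0}\rcf}$ with an isomorphic image of $\alpha+J\beta$ but cannot force either to vanish, and it says nothing at all about $\alpha$ alone. The paper closes the system with genuinely new second-order input that is \emph{not} a consequence of $d^2\omega=0$: it passes to the local $\SU(3)$-structure $\Psi=\psi_++i\psi_-$ with $\psi_+$ proportional to $(d\omega)_{\lcf\lambda^{3,0}\rcf}$, and uses the two further integrability identities $d^2\psi_+=0$ and $d^2\psi_-=0$. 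The first gives $(d\theta-3\,d\eta)_{\lcf\lambda^{2,0}\rcf}=0$; the second gives $d\ln w_1^+=-3\eta-\theta$, hence $d\theta=-3\,d\eta$, and combining the two yields $d\theta=0$. Only then does Proposition~\ref{divergenciaunouno} (your first relation, now with $d\theta=0$) give $d\ln w_1^+=3\eta$, whence $\theta=-2\,d\ln w_1^+=d\ln\frac1{\|\xi_{(1)}\|^2}$. Without an input equivalent to $d^2\psi_\pm=0$, your plan cannot pin down $\alpha$ and $\beta$.
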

 \begin{proof}
 As we have already said,  (i) and the main part of (ii) are easily deduced from the expressions for  $(d \theta )_{[\lambda^{1,1}_0]}$ and $ (d \theta )_{\lcf \lambda^{2,0} \rcf}$ given in Proposition \ref{divergenciaunouno}. It remains to verify the assertion for $n=3$ in $(ii)$. If the component $d\omega_{\lcf \lambda^{3,0} \rcf}$  in $\lcf \lambda^{3,0}\rcf$ of $d\omega$ is zero,  then $d \theta = 0$ because of (i). Hence in the sequel  we assume  $d\omega_{\lcf \lambda^{3,0} \rcf} \neq 0$ and denote $w_1^+ = \frac16 \| d\omega_{\lcf \lambda^{3,0} \rcf}\|$. Now,  we fix as a complex volume form,  $\Psi = \psi_+ + i \psi_-$, where $\psi_+ = \frac1{3w_1^+} d\omega_{\lcf \lambda^{3,0} \rcf}$ and $\psi_- = J_{(1)} \psi_+$. Then we are in the presence, at a least locally, of a $\Lie{SU}(3)$-structure of  type $\mathcal{W}_1^+ \oplus \mathcal{W}_4 \oplus \mathcal{W}_5$ (see \cite{Martin2} for details). The component of the intrinsic $\Lie{SU}(3)$-torsion in $\mathcal{W}_1^+$ is determined by the component of $d\omega$ in $\mathbb{R} \psi_+ \subseteq \real{\lambda^{3,0}}=\mathcal{W}_1$. On the other hand, the component of the intrinsic $\Lie{SU}(3)$-torsion in $\mathcal{W}_5$ is determined by the one-form $\eta$ which is computed by the identity 
 $$
 \ast ( \ast d \psi_+ \wedge \psi_+ +  \ast d \psi_- \wedge \psi_-) = 4(3 \eta -  \theta), 
 $$
 where $\ast$ is the Hodge star operator with respect to the real volume form $\mathrm{Vol} = -\frac14 \psi_+ \wedge \psi_- = \frac16 \omega \wedge \omega \wedge \omega$. In our situation we have the following exterior derivatives (see \cite{Martin2})
 \begin{align*}
 d \omega   =  3 w_1^+ \psi_+ + \theta \wedge \omega,\quad
 d \psi_+  =  (-3 \eta + \theta ) \wedge \psi_+, \quad
 d \psi_-  =  2  w_1^+ \omega \wedge \omega + (-3 \eta + \theta ) \wedge \psi_-.  
 \end{align*}
 Doing again exterior differentiation, we have 
 $
 0 = d^2 \psi_+ = (-3 d  \eta +  d \theta ) \wedge \psi_+. 
 $
  Hence $d\theta = (d \theta )_{\lcf \lambda^{2,0} \rcf} = 3 (d \eta )_{\lcf \lambda^{2,0} \rcf}$. On the other hand, we obtain 
 \begin{equation} \label{lnwuno}
 0 = d^2 \psi_-  = 2  (  dw_1^+ + w_1^+ (3 \eta  +   \theta )) \wedge \omega \wedge \omega,
 \end{equation}
where we have used  $\psi_+ \wedge \omega =0$ and $(-3 d  \eta +  d \theta ) \wedge \psi_- = (-3 d  \eta +  d \theta )_{\lcf \lambda^{2,0} \rcf}  \wedge \psi_ - =0$. From equation \eqref{lnwuno}, 
$
d (\ln w_1^+) = - 3 \eta - \theta 
$
and $d \theta = - 3  d \eta =  - 3 (d \eta )_{\lcf \lambda^{2,0} \rcf}$. This implies $d \theta=0$.

Now, using the expression for $(d \theta )_{\lcf \lambda^{2,0}\rcf}$ in Proposition \ref{divergenciaunouno} , we have $\langle (\nabla^{\Lie{U}(3)}_{e_i} \xi_{(1)} )_{e_i} X_{}, Y_{} \rangle = 0$ and, in terms of $\Lie{SU}(3)$-structure, here $\xi_{(1)} = \xi_{(1)}^+ = \frac12 w_1^+ \psi_-$. Then 
$$
\langle (\nabla^{\Lie{U}(3)}_{e_i} \xi_{(1)} )_{e_i} X_{}, Y_{} \rangle = \tfrac12 \psi_-((dw_1^+)^\sharp, X,Y) + \tfrac12 w_1^+ (\nabla^{\Lie{U}(3)}_{e_i} \psi_-)(e_i,X,Y).
$$
We recall $ \nabla^{\Lie{SU}(3)} = \nabla^{\Lie{U}(3)}  + \eta = \nabla + \xi + \eta$, where  $\eta_X Y = (J\eta)(X) JY$, and  $\nabla^{\Lie{SU}(3)} \psi_- =0$. Therefore, $\nabla^{\Lie{U}(3)}_{X} \psi_- = - \eta_X \psi_- = 3 (J\eta)(X) \psi_+$. This implies
\begin{eqnarray*}
0 & = &  \langle (\nabla^{\Lie{U}(3)}_{e_i} \xi_{(1)} )_{e_i} X_{}, Y_{} \rangle = \tfrac12 \psi_-((dw_1^+)^\sharp, X,Y) + \tfrac32 w_1^+  \psi_+(J\eta^\sharp,X,Y) 
\\
 &  = & 
  \tfrac12  \psi_-((dw_1^+)^\sharp - 3 w_1^+ \eta^\sharp,X,Y).
\end{eqnarray*}
Hence $d(\ln w_1^+) = 3 \eta$ and, using $d (\ln w_1^+) = - 3 \eta - \theta$, we have $\theta = - 2 d\ln w_1^+ = d\ln \frac{1}{w_1^{+2} }$. It is straightforward to check $w_1^{+2} = \| \xi_{(1)}^+ \|^2$ and, in this situation,  $ \| \xi_{(1)}^+ \|^2 =  \| \xi_{(1)} \|^2$. 

Finally, we will prove that if $\xi_{(1)}=0$ for some point $P$, then $ w_1^+ = \|\xi_{(1)}\|^2 =0$ and $d w_1^+ =0$ at $P$. Hence the function $w_1^+$ is constant and equal to zero on the whole connected component. In fact, if $d w_1^+ \neq 0$ on $P$, there exist a sequence $\{P_i\}_{i\in \mathbb{N}}$ of points converging to $P$ such that $w_1^+(P_i) \neq 0$. Then we have $(d w_1^+)_{P_i} = - \frac12 w_1^+(P_i) \theta_{P_i}$ which converges to $(d w_1^+)_P = - \frac12 w_1^+ (P) \theta_P =0$, contradiction.        
 \end{proof}

\section{Lee form and Riemannian curvature} 
Next we will display relations between the Lee form and $\Lie{U}(n)$-components of the Riemannian curvature  $R$. Some of these components are determined by means of the Ricci tensor $\Ric$ and  a Ricci type tensor $\Ric^*$  associated to the almost Hermitian structure. $\Ricac$ is called the \textit{$\ast$-Ricci curvature tensor} and defined by $\Ricac(X,Y) = \langle R_{X,e_i} JY, Je_i \rangle$, where $R_{X,Y} = \nabla_{[X,Y]} - [\nabla_X, \nabla_Y]$.   Because $\Ricac(JX,JY) = \Ricac(Y,X)$, its symmetric part is in $[\lambda^{1,1}]$ and its skew-symmetric part is in $\lcf \lambda^{2,0}\rcf$. 

\subsection{Some components in the orthogonal complement of the space of Kähler curva\-tu\-res} 
\setcounter{equation}{0}
The components of $R$ determined by the difference $\Ric - \Ric^*$ are  in  $\mathcal{K}_{-1} \cong \mathbb{R}$, $\mathcal{K}_{-2} \cong [\lambda^{1,1}_0]$, $\mathcal{C}_6 \cong \real{\lambda^{2,0}}$ and  $\mathcal{C}_8 \cong \real{\sigma^{2,0}}$ (these notations have been fixed in \cite{Falcitelli-FS:aH}). Such components are included in the orthogonal complement $\mathcal{K}^\perp$ of the space $\mathcal{K}$ of those curvature tensors satisfying the same properties as the curvature of a Kähler manifold, i.e. $\langle R_{X,Y}Z,U\rangle = \langle R_{X,Y}JZ,JU\rangle$.   Such an orthogonal complement is obtained in the space $\mathcal{R}$ of possible Riemannian curvature tensors, i.e. $\mathcal{R} = \mathcal{K} \oplus \mathcal{K}^\perp$.

Because they are included in $\mathcal{K}^\perp$, the above mentioned components of $R$ can be given in terms of the intrinsic torsion. Thus such   an expression for $\Ric - \Ric^*$(see \cite{Martin3}) is 
 \begin{equation} \label{difric}
    \begin{split}
      \Ric (X,Y) - \Ric^* (X,Y)
        & = -2 \inp{(\Nt_{e_i}\xi)_X Y}{e_i} + 2 \inp{(\Nt_X\xi)_{e_i} Y}{e_i} \\
      &\qquad - 2 \inp{\xi_{\xi_{e_i}X}Y}{e_i} + 2 \inp{\xi_{\xi_X e_i}Y}{e_i}.
    \end{split}
  \end{equation}
From this identity,  taking into account  properties of $\xi_{(i)}$ and the fact that $\Nt_X \theta$ is the  Lee form of $\Nt_X \xi$, it is long but straightforward to derive 
  \begin{equation} \label{ricminusricacuno}
    \begin{split}
     ( \Ric -  \Ric^*)_{[\lambda^{1,1}]} (X,Y)
      &  =
          - 2 \inp{(\Nt_{e_i}\xi_{(3)})_X Y}{e_i}
          - \tfrac{n-2}{2} ( ( \nabla_X \theta ) ( Y)  + ( \nabla_{JX} \theta ) ( JY))
  \\
  &\quad \,     
   + \tfrac12 \langle X, Y \rangle (d^* \theta + \tfrac{2n-3}2 \|\theta \|^2 )
   +  4 \inp{\xi_{(1) X} e_i}{\xi_{(1)Y}e_i} 
\\
&
\quad \,  
 - 2 \langle \xi_{(2) e_i} X , \xi_{(2) e_i } Y \rangle 
  - \tfrac{n-2}{4} (\theta(X) \theta(Y) + \theta(JX) \theta(JY))
  \\
  &
 \quad \, - 2\inp{\xi_{(1) X} e_i}{\xi_{(2)Y}e_i}  +   \inp{\xi_{(1)Y}  e_i}{\xi_{(2)X}e_i} 
+(n-2) \theta (\xi_{(3)X} Y).
                  \end{split}
  \end{equation}

Since   $( \Ric -  \Ric^*)_{[\lambda^{1,1}]}$ is symmetric, the skew symmetric part of the right side in \eqref{ricminusricacuno} must be zero. In fact, this is the case, because such a skew-symmetric part is given by one half of  the expression in the right side of the second identity of Lemma  \ref{lambdaunouno} which is  consequence of $d^2 \omega=0$. In other words, the above mentioned skew symmetric part is equal to  $  \tfrac{n-2}{2} (d \theta)_{[\lambda^{1,1}_0]} - A$, where $A$ is the corresponding expression given in Proposition \ref{divergenciaunouno}.  

The component of the curvature in $\mathcal{K}_{-1}$ is  determined by the difference of the scalar curvatures $s$ and $s^*$ with respect to $\Ric$ and $\Ric^*$. By using the previous identity for $\Ric- \Ric^*$, it is obtained the following result.
\begin{lemma} For an almost Hermitian manifold,  we have
\begin{equation} \label{difsca}
s- s^* =  2 (n-1) d^*\theta + (n-1)^2 \|\theta\|^2 +  4 \| \xi_{(1)} \|^2 - 2 \| \xi_{(2)} \|^2, 
\end{equation}
where $\| \xi_{(a)} \|^2 = \langle \xi_{(a)e_i} e_j, \xi_{(a)e_i} e_j\rangle$, $a=1,2$. 
\end{lemma}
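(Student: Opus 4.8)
The plan is to obtain $s - s^*$ as the metric trace of the expression \eqref{ricminusricacuno} for $(\Ric - \Ric^*)_{[\lambda^{1,1}]}$. Indeed $s = \sum_j \Ric(e_j, e_j)$ and $s^* = \sum_j \Ric^*(e_j, e_j)$, so $s - s^* = \sum_j (\Ric - \Ric^*)(e_j, e_j)$; and since the components of $\Ric - \Ric^*$ lying in $\real{\lambda^{2,0}}$ (skew-symmetric) and in $\real{\sigma^{2,0}}$ (symmetric, $J$-anti-invariant and trace-free) contribute nothing to a contraction, one has $s - s^* = \sum_j (\Ric - \Ric^*)_{[\lambda^{1,1}]}(e_j, e_j)$. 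The whole computation therefore reduces to setting $X = Y = e_j$ in \eqref{ricminusricacuno} and summing over $j$.

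First I would dispose of the terms that vanish for structural reasons. The terms $-2\inp{(\Nt_{e_i}\xi_{(3)})_X Y}{e_i}$ and $(n-2)\theta(\xi_{(3)X}Y)$ both disappear under the trace because $\sum_j \xi_{(3)e_j} e_j = 0$ (the module $\mathcal{W}_3$ carries no Lee vector), a property preserved by $\Nt$ since $\Nt$ is a $\Lie{U}(n)$-connection. The mixed term $-2\inp{\xi_{(1)X}e_i}{\xi_{(2)Y}e_i} + \inp{\xi_{(1)Y}e_i}{\xi_{(2)X}e_i}$ traces to $-\langle \xi_{(1)}, \xi_{(2)}\rangle$, which is zero because $\mathcal{W}_1$ and $\mathcal{W}_2$ are inequivalent $\Lie{U}(n)$-modules and hence orthogonal.

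For the surviving terms I would use the elementary contractions $\sum_j (\nabla_{e_j}\theta)(e_j) = \sum_j (\nabla_{Je_j}\theta)(Je_j) = -d^*\theta$ (the second because $\{Je_j\}$ is again orthonormal), $\sum_j \theta(e_j)^2 = \sum_j \theta(Je_j)^2 = \|\theta\|^2$, and $\sum_j \langle e_j, e_j\rangle = 2n$, together with the definitions of $\|\xi_{(1)}\|^2$ and $\|\xi_{(2)}\|^2$. These turn the five remaining terms into $(n-2)d^*\theta$, $n\,d^*\theta + \tfrac{n(2n-3)}{2}\|\theta\|^2$, $4\|\xi_{(1)}\|^2$, $-2\|\xi_{(2)}\|^2$ and $-\tfrac{n-2}{2}\|\theta\|^2$ respectively; adding them yields the $d^*\theta$-coefficient $2(n-1)$, the $\|\theta\|^2$-coefficient $\tfrac{n(2n-3)}{2} - \tfrac{n-2}{2} = (n-1)^2$, and the coefficients $4$ and $-2$ for $\|\xi_{(1)}\|^2$ and $\|\xi_{(2)}\|^2$, which is exactly \eqref{difsca}. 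There is no serious obstacle here: the only points needing care are sign-sensitive bookkeeping, namely that the two $d^*\theta$ contributions come from different terms under the convention $d^*\theta = -\sum_j(\nabla_{e_j}\theta)(e_j)$, and that the structural cancellations of the $\xi_{(3)}$ and $\xi_{(1)}\xi_{(2)}$ contributions are justified rather than merely assumed.
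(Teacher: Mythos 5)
Your proposal is correct and follows essentially the same route as the paper, which obtains \eqref{difsca} precisely by tracing the identity \eqref{ricminusricacuno} for $(\Ric-\Ricac)_{[\lambda^{1,1}]}$. The structural cancellations you invoke (tracelessness of $\xi_{(3)}$ preserved by $\nabla^{\Lie{U}(n)}$, orthogonality of $\mathcal{W}_1$ and $\mathcal{W}_2$) and the coefficient bookkeeping all check out.
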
 
 Next we point out some immediate consequences of \eqref{difsca}.
\begin{proposition} $\,$
\begin{enumerate}
\item[$(i)$] An almost Kähler manifold $(\xi \in \mathcal{W}_2)$  such that $R \in \mathcal{K}$ is Kähler. 
\item[$(ii)$] An almost Hermitian manifold such that  $\xi \in \mathcal{W}_1\oplus  \mathcal{W}_4$,  $d^*\theta \geq 0$ $(\Div \, \theta^\sharp \leq 0)$ and $R \in \mathcal{K}$ is Kähler.
\item[$(iii)$] A compact  almost Hermitian manifold such that  $\xi \in \mathcal{W}_1 \oplus \mathcal{W}_4$ and $R \in \mathcal{K}$ is Kähler.
\end{enumerate}
\end{proposition}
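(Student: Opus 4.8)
The plan is to read everything off the scalar-curvature identity \eqref{difsca}, the first step being to note that the hypothesis $R\in\mathcal{K}$ forces its left-hand side to vanish. The curvature component in $\mathcal{K}_{-1}\cong\mathbb{R}$ is precisely the one measured by $s-s^*$, as recalled just before \eqref{difsca}; since $\mathcal{K}_{-1}\subseteq\mathcal{K}^\perp$, assuming $R\in\mathcal{K}$ kills this component and hence $s=s^*$. Once the left-hand side of \eqref{difsca} is zero, each part of the statement amounts to showing that a sum of manifestly non-negative quantities must vanish term by term.

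For part (i), I would use that $\xi\in\mathcal{W}_2$ means $\xi_{(1)}=\xi_{(3)}=\xi_{(4)}=0$, so in particular $\theta=0$ and \eqref{difsca} collapses to $0=-2\|\xi_{(2)}\|^2$. This forces $\xi_{(2)}=0$ as well, so $\xi=0$ and the structure is K\"ahler. For part (ii), the type $\mathcal{W}_1\oplus\mathcal{W}_4$ gives $\xi_{(2)}=\xi_{(3)}=0$, so \eqref{difsca} reduces to
\[
0 = 2(n-1)\,d^*\theta + (n-1)^2\|\theta\|^2 + 4\|\xi_{(1)}\|^2 .
\]
As $n>1$ and $d^*\theta\geq 0$, all three summands are pointwise non-negative; their sum being zero forces each to vanish, so $\theta=0$ and $\xi_{(1)}=0$, whence $\xi=0$ and the structure is K\"ahler.

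For part (iii) I would derive the same reduced identity, but in the absence of a pointwise sign on $d^*\theta$ I would integrate it over the closed manifold. Because $\int_M d^*\theta\,dV=0$ for any one-form on a compact manifold without boundary, integration leaves
\[
0 = \int_M \bigl((n-1)^2\|\theta\|^2 + 4\|\xi_{(1)}\|^2\bigr)\,dV ,
\]
and the non-negativity of the integrand forces $\theta=0$ and $\xi_{(1)}=0$ everywhere, again yielding $\xi=0$.

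The only genuinely delicate point I anticipate is the opening implication $R\in\mathcal{K}\Rightarrow s=s^*$, which rests on identifying the $\mathcal{K}_{-1}$-summand of the curvature with the scalar $s-s^*$. Granting this, the three cases are routine: parts (i) and (ii) are pointwise sign arguments, and part (iii) needs only a single application of the divergence theorem to discard the $d^*\theta$ term.
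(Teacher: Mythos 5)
Your proposal is correct and is exactly the argument the paper intends: the paper states the proposition as an "immediate consequence" of the identity \eqref{difsca}, relying precisely on the fact that $R\in\mathcal K$ kills the $\mathcal K_{-1}$-component and hence forces $s=s^*$, after which the three cases follow by the pointwise sign arguments in (i)--(ii) and the divergence theorem in (iii). No discrepancy with the paper's route.
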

  Claim (i) and a part of  (ii) have been  proved by Falcitelli et al. in  \cite[Prop. 5.5]{Falcitelli-FS:aH}. Other  parts of (ii) and (iii) have been  shown by Vaisman in \cite[Theorem 2.1. (i)]{Vais}.
 \,
  \vspace{2mm}
 
Next we will deduce an expression for the $\lcf \lambda^{2,0}\rcf$-component $\Ric^*_{\lcf \lambda^{2,0}\rcf} = - (\Ric - \Ricac)_{\lcf \lambda^{2,0}\rcf}$ of $\Ric^*$. From \eqref{difric},  taking into account  properties of $\xi_{(i)}$ and the fact that $\Nt_X \theta$ is the  Lee form of $\Nt_X \xi$,  it is obtained 
  \begin{equation} \label{Ricastuno}
    \begin{split}
          \Ric^*_{\real{\lambda^{2,0}}} (X,Y) 
     & 
     =
   \;   2 \inp{(\Nt_{e_i}\xi_{(1)})_{e_i} X}{Y} -   \inp{(\Nt_{e_i}\xi_{(2)})_{e_i} X}{Y}+ \tfrac{n-1}2 (d \theta)_{\lcf \lambda^{2,0} \rcf} (X,Y)
          \\
      & 
      \qquad
       -   \inp{\xi_{(1)Y}e_i}{\xi_{(3)X} e_i}+   \inp{\xi_{(1)X}e_i}{\xi_{(3)Y} e_i}- (n-3) \langle \xi_{(1)\theta^\sharp} X,  Y\rangle 
             \\
      &
\qquad - \tfrac12 \langle \xi_{(2)X} e_i , \xi_{(3)Y} e_i \rangle  
         + \tfrac12  \langle \xi_{(2)Y} e_i , \xi_{(3)X} e_i \rangle + \tfrac{n}2 \langle \xi_{(2)\theta^\sharp} X,  Y\rangle.
         \end{split} 
  \end{equation}
 Another expression for $ \Ric^*_{\real{\lambda^{2,0}}}$ has been obtained in   \cite[Lemma 3.8]{Martin3}, it is given by  
$$
\Ric^*_{\real{\lambda^{2,0}}}(X,Y) =   \langle (\nabla^{\Lie{U}(n)}_{e_i} \xi)_{Je_i} JX, Y \rangle  - \langle \xi_{J \xi_{e_i} e_i} JX, Y \rangle.
$$
Now,  as before,  taking into account  properties of $\xi_{(i)}$, it is deduced  
\begin{equation} \label{Ricastdos}
    \begin{split}
          \Ric^*_{\lcf \lambda^{2,0}\rcf} (X,Y) 
     & 
     =
     -   \inp{(\Nt_{e_i}\xi_{(1)})_{e_i} X}{Y} 
     -    \inp{(\Nt_{e_i}\xi_{(2)})_{e_i} X}{Y} 
        +   \inp{(\Nt_{e_i}\xi_{(3)})_{e_i} X}{Y}
        \\
     &    
     \quad  +  \tfrac12 (d \theta)_{\lcf \lambda^{2,0} \rcf} (X,Y)  
    +     \tfrac{n-3}{2} \langle \xi_{(1)\theta^\sharp} X,  Y\rangle 
 + \tfrac{n}{2}     
\langle \xi_{(2)\theta^\sharp} X,  Y\rangle
\\
&
  \quad
  -   \tfrac{n-1}{2}    \langle \xi_{(3)\theta^\sharp} X,  Y\rangle.
         \end{split} 
  \end{equation}
If we take the difference between the identities \eqref{Ricastuno} and \eqref{Ricastdos}, we will obtain $\tfrac{n-2}{2} (d \theta)_{\lcf \lambda^{2,0} \rcf} - B$, where $B$ is the right side expression of the corresponding identity in Proposition \ref{divergenciaunouno}. This was already noted in \cite{Martin3} and explains why both expressions  agree.

Next, as consequences of the  identity \eqref{Ricastuno} (or \eqref{Ricastdos}) and those ones given in Proposition \ref{divergenciaunouno}, we display some relevant particular situations.  
\begin{proposition} \label{skewricac}
On almost Hermitian manifolds, relative to   $\Ricac_{\lcf \lambda^{2,0}\rcf}$, we have: 
\begin{enumerate}
\item[$(i)$] If $\xi \in \mathcal{W}_1 \oplus \mathcal{W}_2 \oplus \mathcal{W}_4$, then
\begin{equation*}
    \begin{split}
  \Ric^*_{\real{\lambda^{2,0}}}(X,Y)            &
                 =  -   \inp{(\Nt_{e_i}\xi_{(2)})_{e_i} X}{Y} 
	+ \tfrac{n+1}6 (d \theta)_{\lcf \lambda^{2,0} \rcf} (X,Y)
+ \tfrac{n}2 \langle \xi_{(2)\theta^\sharp} X,  Y\rangle. 
         \end{split} 
  \end{equation*}
 In particular:
 \begin{enumerate}
 \item[$(a)$]
  if $\xi \in   \mathcal{W}_1  \oplus \mathcal{W}_4$, then $\Ric^*_{\real{\lambda^{2,0}}}=  \tfrac{n+1}{6}  d\theta$. Moreover, if $n=3$, $\Ric^*_{\real{\lambda^{2,0}}}= d \theta = 0$.
   \item[$(b)$]
  if $\xi \in   \mathcal{W}_2  \oplus \mathcal{W}_4$ and $n>2$, then $$
  \Ric^*_{\real{\lambda^{2,0}}}(X,Y)= -   \inp{(\Nt_{e_i}\xi_{(2)})_{e_i} X}{Y}
+ \tfrac{n}2 \langle \xi_{(2)\theta^\sharp} X,  Y\rangle; 
$$
if $n=2$, then 
$$
\qquad \; \Ric^*_{\real{\lambda^{2,0}}}(X,Y) = -   \inp{(\Nt_{e_i}\xi_{(2)})_{e_i} X}{Y} 
	+ \tfrac{1}2 (d \theta)_{\lcf \lambda^{2,0} \rcf} (X,Y)
+  \langle \xi_{(2)\theta^\sharp} X,  Y\rangle.
$$
  \end{enumerate}

\item[$(ii)$]
If the structure is  Hermitian $( \xi \in \mathcal{W}_3 \oplus \mathcal{W}_4)$,  then:
\begin{enumerate}
\item[$(a)$] 
$\Ric^*_{\real{\lambda^{2,0}}}  =  \tfrac{n-1}{2}  d\theta_{\real{\lambda^{2,0}}}$; moreover, if $n>2$, we also have the expression 
$$
\Ric^*_{\real{\lambda^{2,0}}}   =   \tfrac{n-1}{n-2} \left( \langle (\nabla^{\Lie{U}(n)}_{e_i} \xi_{(3)})_{e_i} X, Y \rangle - \tfrac{n-1}{2} \langle \xi_{(3) \theta^\sharp } X, Y \rangle \right).
$$
\item[$(b)$]
If $n=2$, $(\Ric - \Ricac)_{[\lambda^{1,1}_0]} =0$.
\end{enumerate}
\end{enumerate}
\end{proposition}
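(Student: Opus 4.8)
The plan is to derive everything from the single master identity \eqref{Ricastuno} for $\Ric^*_{\real{\lambda^{2,0}}}$, specialising the intrinsic torsion in each case and using the $\lcf\lambda^{2,0}\rcf$-expression for $d\theta$ from Proposition \ref{divergenciaunouno} to eliminate the derivative term $\inp{(\Nt_{e_i}\xi_{(1)})_{e_i}X}{Y}$.

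For part (i) I would set $\xi_{(3)}=0$ in \eqref{Ricastuno}; this kills the three inner products carrying a factor $\xi_{(3)}$ and leaves only the terms in $\inp{(\Nt_{e_i}\xi_{(1)})_{e_i}X}{Y}$, $\inp{(\Nt_{e_i}\xi_{(2)})_{e_i}X}{Y}$, $(d\theta)_{\lcf\lambda^{2,0}\rcf}$, $\langle\xi_{(1)\theta^\sharp}X,Y\rangle$ and $\langle\xi_{(2)\theta^\sharp}X,Y\rangle$. With $\xi_{(3)}=0$ the third identity of Proposition \ref{divergenciaunouno} becomes
\[
\tfrac{n-2}{2}(d\theta)_{\lcf\lambda^{2,0}\rcf}(X,Y) = -3\inp{(\Nt_{e_i}\xi_{(1)})_{e_i}X}{Y} + \tfrac{3(n-3)}{2}\langle\xi_{(1)\theta^\sharp}X,Y\rangle,
\]
which I would solve for $\inp{(\Nt_{e_i}\xi_{(1)})_{e_i}X}{Y}$ and substitute back. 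The coefficient of $(d\theta)_{\lcf\lambda^{2,0}\rcf}$ then becomes $-\tfrac{n-2}{3}+\tfrac{n-1}{2}=\tfrac{n+1}{6}$, the two $\langle\xi_{(1)\theta^\sharp}X,Y\rangle$ contributions cancel exactly, and the $\xi_{(2)}$-terms survive unchanged, yielding the claimed formula. This bookkeeping, keeping the $n$-dependent coefficients straight, is the main (though routine) obstacle. The argument is valid for all $n>1$: when $n=2$ the module $\mathcal{W}_1$ is absent, so $\xi_{(1)}=0$ and both displays degenerate consistently (with $\tfrac{n+1}{6}=\tfrac{n-1}{2}=\tfrac12$).

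The special cases then follow from the vanishing results already in hand. For (a), $\xi\in\mathcal{W}_1\oplus\mathcal{W}_4$ gives $\xi_{(2)}=0$, so (i) reduces to $\Ric^*_{\real{\lambda^{2,0}}}=\tfrac{n+1}{6}(d\theta)_{\lcf\lambda^{2,0}\rcf}$; since $(d\theta)_{\mathbb{R}\omega}=0$ always and $(d\theta)_{[\lambda^{1,1}_0]}=0$ for this type by Proposition \ref{w1w4}(ii), one has $d\theta=(d\theta)_{\lcf\lambda^{2,0}\rcf}$, hence $\Ric^*_{\real{\lambda^{2,0}}}=\tfrac{n+1}{6}d\theta$, and for $n=3$ the same proposition gives $d\theta=0$. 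For (b), $\xi\in\mathcal{W}_2\oplus\mathcal{W}_4$ gives $\xi_{(1)}=0$; when $n>2$ Proposition \ref{w1w4}(i) yields $d\theta=0$, so the $(d\theta)_{\lcf\lambda^{2,0}\rcf}$-term drops, while for $n=2$ this vanishing is unavailable and one merely records (i) with $\tfrac{n+1}{6}=\tfrac12$ and $\tfrac{n}{2}=1$.

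For part (ii) I would set $\xi_{(1)}=\xi_{(2)}=0$ in \eqref{Ricastuno}, collapsing it to $\Ric^*_{\real{\lambda^{2,0}}}=\tfrac{n-1}{2}(d\theta)_{\lcf\lambda^{2,0}\rcf}=\tfrac{n-1}{2}d\theta_{\real{\lambda^{2,0}}}$, which is (ii)(a). For the second expression (valid for $n>2$) I would instead take the third identity of Proposition \ref{divergenciaunouno} with $\xi_{(1)}=\xi_{(2)}=0$, solve for $(d\theta)_{\lcf\lambda^{2,0}\rcf}$ by dividing by $n-2$ (whence the restriction $n>2$), and feed it back to produce the factor $\tfrac{n-1}{n-2}$. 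Finally, for (ii)(b) the crucial point is that for $n=2$ the module $\mathcal{W}_3$ does not exist, so a Hermitian structure is of type $\mathcal{W}_4$ alone, i.e. $\xi=\xi_{(4)}$; substituting $\xi_{(1)}=\xi_{(2)}=\xi_{(3)}=0$ and $n=2$ into \eqref{ricminusricacuno} annihilates every term except the one proportional to $\langle X,Y\rangle$, which lies in the $\mathbb{R}$-summand of $[\lambda^{1,1}]$ rather than in $[\lambda^{1,1}_0]$, so $(\Ric-\Ricac)_{[\lambda^{1,1}_0]}=0$.
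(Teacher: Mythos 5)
Your proposal is correct and follows essentially the same route the paper intends: the paper gives no separate proof of Proposition \ref{skewricac} beyond the remark that it is a consequence of \eqref{Ricastuno} (or \eqref{Ricastdos}) together with Proposition \ref{divergenciaunouno}, and your specialisations, coefficient bookkeeping (e.g.\ $-\tfrac{n-2}{3}+\tfrac{n-1}{2}=\tfrac{n+1}{6}$ and the cancellation of the $\xi_{(1)\theta^\sharp}$ terms), and the use of \eqref{ricminusricacuno} for (ii)(b) reproduce exactly that derivation.
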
   
Part (ii) in Proposition  completes the result  given in \cite[Proposition 7.2]{Falcitelli-FS:aH}.

Another component  of the curvature in the orthogonal complement  $\mathcal{K}^\perp$ is determined by the  $\lcf \sigma^{2,0}\rcf$-component $\Ric_{\lcf \sigma^{2,0}\rcf} = (\Ric - \Ricac)_{\lcf \sigma^{2,0}\rcf}$ of $\Ric$. From \eqref{difric},   taking into account  properties of $\xi_{(i)}$ and the fact that $\Nt_X \theta$ is the  Lee form of $\Nt_X \xi$,  it is obtained 
\begin{equation*} 
    \begin{split}
       \Ric_{\lcf \sigma^{2,0} \rcf} (X,Y)
        & = -  \inp{(\Nt_{e_i}\xi_{(2)})_X Y}{e_i}    -  \inp{(\Nt_{e_i}\xi_{(2)})_Y X}{e_i}
        \\
        & 
        \quad - \tfrac{n-1}{4} ( (\nabla_X \theta)(Y)  +   (\nabla_Y \theta)(X) 
         -   (\nabla_{JX} \theta)(JY)    -   (\nabla_{JY} \theta)(JX))
         \\
      &
      \quad  
       +  \inp{\xi_{(1)X}e_i}{\xi_{(3)Y} e_i}
          +   \inp{\xi_{(1)Y}e_i}{\xi_{(3)X} e_i}
       - \tfrac12   \inp{\xi_{(2)X}e_i}{\xi_{(3)Y} e_i}
       \\
       &
       \quad
          - \tfrac12   \inp{\xi_{(2)Y}e_i}{\xi_{(3)X} e_i} + \tfrac{n-2}{2} \theta (\xi_{(2)X} Y + \xi_{(2)Y} X). 
               \end{split}
  \end{equation*}
  As a consequence of this expression we have the following result. 
\begin{proposition}\label{ricsigma}
If an almost Hermitian manifold is such that $\xi \in \mathcal{W}_1 \oplus \mathcal{W}_4$ or $\xi \in \mathcal{W}_3 \oplus \mathcal{W}_4$, then   
\begin{equation*} 
    \begin{split}
       \Ric_{\lcf \sigma^{2,0} \rcf} (X,Y)
        & = - \tfrac{n-1}{4} ( (\nabla_X \theta)(Y)  +   (\nabla_Y \theta)(X) 
         -   (\nabla_{JX} \theta)(JY)    -   (\nabla_{JY} \theta)(JX)).
               \end{split}
  \end{equation*}
Moreover,  in such cases, if  the vector field $\theta^\sharp$ is Killing, then   $\Ric_{\lcf \sigma^{2,0} \rcf}=0$.
\end{proposition}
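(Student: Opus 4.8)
The plan is to read both assertions directly off the general formula for $\Ric_{\lcf \sigma^{2,0}\rcf}$ displayed immediately before the statement, which I take as given. That formula writes $\Ric_{\lcf \sigma^{2,0}\rcf}(X,Y)$ as the sum of the target term $-\tfrac{n-1}{4}((\nabla_X \theta)(Y)+(\nabla_Y \theta)(X)-(\nabla_{JX}\theta)(JY)-(\nabla_{JY}\theta)(JX))$, two covariant-derivative terms in $\xi_{(2)}$, a $\theta$-term quadratic in $\xi_{(2)}$, and four quadratic pairings: two of the form $\xi_{(1)}\!\cdot\!\xi_{(3)}$ and two of the form $\xi_{(2)}\!\cdot\!\xi_{(3)}$. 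The whole content of the first assertion is to check that, under either hypothesis, everything except the $\nabla\theta$-term drops out.

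First I would observe that the two hypotheses $\xi\in\mathcal{W}_1\oplus\mathcal{W}_4$ and $\xi\in\mathcal{W}_3\oplus\mathcal{W}_4$ share the feature $\xi_{(2)}=0$. This alone annihilates the two $\Nt\xi_{(2)}$ terms, the two $\xi_{(2)}\!\cdot\!\xi_{(3)}$ pairings, and the final term $\tfrac{n-2}{2}\theta(\xi_{(2)X}Y+\xi_{(2)Y}X)$. The only surviving quadratic contributions are then the two $\xi_{(1)}\!\cdot\!\xi_{(3)}$ pairings $\inp{\xi_{(1)X}e_i}{\xi_{(3)Y}e_i}$ and $\inp{\xi_{(1)Y}e_i}{\xi_{(3)X}e_i}$, and these vanish as well: in the first case because $\xi_{(3)}=0$, and in the second because $\xi_{(1)}=0$. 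Hence in both cases only the $-\tfrac{n-1}{4}(\dots)$ term remains, which is exactly the claimed expression.

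For the second part I would use the standard characterization of the Killing condition. Writing $S(X,Y)=(\nabla_X \theta)(Y)+(\nabla_Y \theta)(X)$ for twice the symmetric part of $\nabla\theta$, the surviving term reads $\Ric_{\lcf \sigma^{2,0}\rcf}(X,Y)=-\tfrac{n-1}{4}\,(S(X,Y)-S(JX,JY))$, since the Levi-Civita connection is metric and therefore commutes with the musical isomorphisms, giving $\langle\nabla_X\theta^\sharp,Y\rangle=(\nabla_X\theta)(Y)$. The field $\theta^\sharp$ is Killing precisely when $\langle\nabla_X\theta^\sharp,Y\rangle+\langle\nabla_Y\theta^\sharp,X\rangle=0$ for all $X,Y$, i.e.\ when $S\equiv 0$. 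Thus both $S(X,Y)$ and $S(JX,JY)$ vanish identically and $\Ric_{\lcf \sigma^{2,0}\rcf}=0$ follows at once.

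There is essentially no obstacle once the general formula is available; the entire proposition is a matter of setting the appropriate torsion components to zero and recalling that a Killing vector field has vanishing symmetric covariant derivative. The only point deserving a moment's care is confirming that the two distinct hypotheses each annihilate \emph{all} quadratic terms, which they do through the common vanishing $\xi_{(2)}=0$ together with the complementary vanishings $\xi_{(3)}=0$ (first case) and $\xi_{(1)}=0$ (second case).
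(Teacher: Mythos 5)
Your proof is correct and follows exactly the route the paper intends: the proposition is stated as an immediate consequence of the displayed general formula for $\Ric_{\lcf \sigma^{2,0}\rcf}$, with all terms other than the $\nabla\theta$-term killed by $\xi_{(2)}=0$ together with $\xi_{(3)}=0$ (first case) or $\xi_{(1)}=0$ (second case), and the Killing hypothesis annihilating the symmetric part of $\nabla\theta$. Nothing further is needed.
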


\subsection{Ricci forms}
 We would like to  focus our  attention to components of the curvature included in the space $\mathcal{K}$ of type  Kähler curvature  tensors. Such an space  is decomposed into $\mathcal{K} = \mathcal{C}_3 \oplus \mathcal{K}_1 \oplus 
\mathcal{K}_2$, where $\mathcal{C}_3 \cong [\sigma^{2,2}_0]$,  $\mathcal{K}_1 \cong \mathbb{R}$ and  $\mathcal{K}_2 \cong [\lambda^{1,1}_0]$ (see \cite{Falcitelli-FS:aH}). The components in $\mathcal{K}_1$ and in $\mathcal{K}_2$ can be determined in terms of tensor  $\Ric$ and $\Ricac$. More precisely, they are obtained by the tensor  $(\Ric + 3 \Ricac)_{[\lambda^{1,1}]}$. To derive expressions for such a tensor,  
we consider convenient to  previously  recall some notions relative to Ricci forms (see \cite{Gau}). 
\begin{definition}
{\rm For  almost Hermitian manifolds, given a metric connection $D$ with curvature tensor $R_D$, the  \textit{first
 $($second$)$  Ricci form}  of $D$ is the two-form $\rho_D$ $(r_D)$ given by 
$$
\rho_D(X,Y) = - \tfrac12 \langle R_D(e_i,Je_i) X, Y \rangle \qquad r_D(X,Y) = - \tfrac12 \langle R_D(X,Y) e_i, J e_i \rangle.
$$ }
\end{definition}

Next we compute the Ricci forms relative to  the Levi Civita  and the minimal connections.
\begin{proposition} \label{riccigeneral}
For  almost Hermitian manifolds, we have:

\noindent $(i)$ If $D$ is a $\Lie{U}(n)$-connection, then $\rho_D \in [\lambda^{1,1}]$ and  $r_D$  is closed.
\begin{align*}
\hspace{-6mm} (ii) \;\,   \Ricac(X,JY)  = &\; \rho_\nabla (X,Y) =  r_\nabla(X,Y) =  r_{\nabla^{\Lie{U}(n)}}(X,Y) +  \langle \xi_{X} e_i , \xi_{Y} Je_i \rangle \qquad \; \qquad \;\;\quad
\\
= & \; \textstyle  r_{\nabla^{\Lie{U}(n)}}(X,Y) 
+ \sum_{a=1}^3 \langle \xi_{(a)X} e_i , \xi_{(a)Y} Je_i \rangle 
 - \textstyle \sum_{a=1}^3 J\theta ( \xi_{(a)X} Y - \xi_{(a)Y} X)
\\
 & 
\textstyle
+ \sum_{1 \leq a < b \leq 3} (\langle \xi_{(a)X} e_i , \xi_{(b)Y} Je_i \rangle - \langle \xi_{(a)Y} e_i , \xi_{(b)X} Je_i \rangle)
\\
&
 -  \tfrac14 \|\theta\|^2  \omega(X,Y)
- \tfrac14 \theta \wedge J \theta (X,Y)
\end{align*}
\begin{align*}
\hspace{-1mm}  (iii) \; \,\rho_{\nabla [\lambda^{1,1}]} (X,Y)
  = & \;
 \rho_{\nabla^{\Lie{U}(n)}}(X,Y) 
 +  \langle \xi_{e_i} X , \xi_{Je_i} Y \rangle 
 \\
  = & \;
  \rho_{\nabla^{\Lie{U}(n)}}(X,Y) 
 + \textstyle \sum_{a=1}^3   \langle \xi_{(a)e_i} X , \xi_{(a)Je_i} Y \rangle
 - \tfrac18 \|\theta \|^2 \omega(X,Y) \qquad \; \qquad \;\;\quad
\\
 & \;
  \textstyle + \sum_{1 \leq a < b \leq 3 } ( \langle \xi_{(a)e_i} X , \xi_{(b)Je_i} Y \rangle -  \langle \xi_{(a)e_i} Y , \xi_{(b)Je_i} X \rangle)
   \\
 &
 - \tfrac12 J\theta(\xi_{(3)X} Y - \xi_{(3)Y} X)   + \tfrac{n-2}{8} \theta \wedge J\theta(X,Y).
 \end{align*}
\end{proposition}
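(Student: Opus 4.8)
The plan is to deduce all three parts from the curvature comparison
\[
R(X,Y) = R^{\Lie{U}(n)}(X,Y) + (\Nt_X\xi)_Y - (\Nt_Y\xi)_X + \xi_{\xi_XY} - \xi_{\xi_YX} - [\xi_X,\xi_Y]
\]
recalled in the Remark after Lemma~\ref{lambdaunouno}, together with one algebraic observation: every $\xi_W$ lies in $\lie{u}(n)^\perp$, so $J\xi_W=-\xi_WJ$, whereas $R^{\Lie{U}(n)}(X,Y)$ and every commutator $[\xi_V,\xi_W]$ commute with $J$. A recurring elementary fact, obtained by replacing the frame $\{e_i\}$ with $\{Je_i\}$ and using that $J$ is an orthogonal isometry, is that $\sum_i\langle Ae_i,Je_i\rangle=0$ whenever $AJ=-JA$. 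For (i): since $D$ is metric each $R_D(X,Y)$ is skew, so $\rho_D$ and $r_D$ are genuine two-forms; from $DJ=0$ the curvature commutes with $J$, and substituting $R_D(e_i,Je_i)JX=JR_D(e_i,Je_i)X$ gives $\rho_D(JX,JY)=\rho_D(X,Y)$, i.e. $\rho_D\in[\lambda^{1,1}]$. For $r_D$ I would rewrite $-\tfrac12\langle R_D(X,Y)e_i,Je_i\rangle=\tfrac12\tr(JR_D(X,Y))$; since $R_D(X,Y)\in\lie{u}(n)$, this is a constant multiple of the induced curvature on the canonical line bundle $\Lambda^{n,0}$, hence the curvature of a connection on a complex line bundle, and therefore closed.

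For (ii) I would prove the four equalities separately. The identity $\rho_\nabla=r_\nabla$ is immediate from the pair symmetry $\langle R(e_i,Je_i)X,Y\rangle=\langle R(X,Y)e_i,Je_i\rangle$ of the Levi-Civita curvature. For $\Ricac(X,JY)=\rho_\nabla(X,Y)$ I would feed the first Bianchi identity $R(e_i,Je_i)X+R(Je_i,X)e_i+R(X,e_i)Je_i=0$ into $\rho_\nabla$; after the substitution $e_i\mapsto Je_i$ the two resulting sums coincide, yielding $\rho_\nabla(X,Y)=-\sum_i\langle R(X,e_i)Y,Je_i\rangle$, which is exactly $\Ricac(X,JY)$. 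The substantive step is the third equality: inserting the comparison into $r_\nabla=-\tfrac12\langle R(X,Y)e_i,Je_i\rangle$, the $R^{\Lie{U}(n)}$ term gives $r_{\Nt}$, while the derivative terms $(\Nt_X\xi)_Y-(\Nt_Y\xi)_X$ and the terms $\xi_{\xi_XY}-\xi_{\xi_YX}$ all lie in $\lie{u}(n)^\perp$ and so drop out by the frame fact above. Only the commutator survives, and $\tfrac12\langle[\xi_X,\xi_Y]e_i,Je_i\rangle=\langle\xi_Xe_i,\xi_YJe_i\rangle$ follows from the same $e_i\mapsto Je_i$ trick together with the skewness and anti-commutation of $\xi$.

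Part (iii) runs along identical lines, but now the contraction is on the first two arguments: substituting the comparison into $\rho_\nabla(X,Y)=-\tfrac12\langle R(e_i,Je_i)X,Y\rangle$ produces $\rho_{\Nt}$ from the first term, whereas the derivative terms and the $\xi_{\xi\cdot}$ terms, being $\lie{u}(n)^\perp$-valued, anti-commute with $J$ and hence define two-forms of type $\lcf\lambda^{2,0}\rcf$, which vanish under the $[\lambda^{1,1}]$-projection. The surviving commutator $[\xi_{e_i},\xi_{Je_i}]$ commutes with $J$, is already of type $[\lambda^{1,1}]$, and collapses to $\langle\xi_{e_i}X,\xi_{Je_i}Y\rangle$ by the same frame substitution, giving the first line of (iii).

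Finally, in both (ii) and (iii) I would expand $\xi=\sum_{a=1}^4\xi_{(a)}$ in the surviving quadratic expression, read off the diagonal terms ($a=b\le3$) and the antisymmetrised cross terms ($a<b\le3$), and rewrite every remaining $\xi_{(4)}$-contribution through $4\xi_{(4)X}=X^\flat\otimes\theta^\sharp-\theta\otimes X-JX^\flat\otimes J\theta^\sharp+J\theta\otimes JX$ to produce the $\|\theta\|^2\omega$, $\theta\wedge J\theta$ and $J\theta(\xi_{(3)X}Y-\xi_{(3)Y}X)$ terms. I expect this last bookkeeping to be the main obstacle: one must verify that the $\xi_{(4)}$-$\xi_{(1)}$ and $\xi_{(4)}$-$\xi_{(2)}$ cross terms cancel in (iii) while all three survive in (ii), and pin down the coefficients $-\tfrac18$, $-\tfrac14$ and $\tfrac{n-2}8$; this rests on the precise symmetry and trace relations of the individual components $\xi_{(a)}$ against the $\theta$-structure rather than on any new conceptual input.
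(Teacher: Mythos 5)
Your proposal is correct and follows essentially the same route as the paper: both rest on the comparison formula $R=R^{\Lie{U}(n)}+(\Nt\xi)-\dots-[\xi,\xi]$, the observation that the $\lie{u}(n)^\perp$-valued terms are killed by the trace against $J$ (in (ii)) or by the $[\lambda^{1,1}]$-projection (in (iii)), and a final expansion of $\xi=\sum_a\xi_{(a)}$ using the explicit form of $\xi_{(4)}$. The only cosmetic difference is in (i), where you invoke the induced connection on the canonical line bundle to see that $r_D$ is closed, while the paper reaches the same conclusion by showing $r_D=d\bigl(\sum_\alpha\wp^*\varpi^{D\,\alpha}_{\;\;\alpha'}\bigr)$ directly from the structure equations.
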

\begin{proof} If we consider an adapted local frame $\wp = \{e_\alpha, Je_\alpha = e_{\alpha'}\}$ to the $\Lie{U}(n)$-structure, $\alpha' = \alpha + n$ and $\alpha=1, \dots, n$, then 
$
r_D(X,Y) = \textstyle \sum_{\alpha=1}^n \Omega^{D\,\alpha}_{\; \;\; \;\alpha'}(\wp_*X,\wp_*Y),
$ 
 where $\Omega^D = \left(\Omega^{D\,i}_{\; \;\; \; j}\right)$ is the curvature two-form of $D$. If $D$ is a $\Lie{U}(n)$-connection and $\varpi^D = \left(\varpi^{D\,i}_{\; \;\; \;j}\right)$ is the connection  one-form of $D$, then one has $\varpi^{D\,\alpha'}_{\; \;\; \;\beta'} =  \varpi^{D\,\alpha}_{\; \;\; \;\beta} = -\varpi^{D\,\beta}_{\; \;\; \;\alpha} $ and $\varpi^{D\,\alpha'}_{\; \;\; \;\beta} = - \varpi^{D\,\alpha}_{\; \;\; \;\beta'} =\varpi^{D\,\beta'}_{\; \;\; \;\alpha} $. Hence
\begin{eqnarray*}
\textstyle\sum_{\alpha=1}^n \sum_{i=1}^{2n} \varpi^{D\,i}_{\; \;\; \;\alpha'} \wedge \varpi^{D\,\alpha}_{\; \;\; \;i} 
& = & \textstyle \sum_{\alpha,\beta=1}^n (\varpi^{D\,\beta}_{\; \;\; \;\alpha'} \wedge \varpi^{D\,\alpha}_{\; \;\; \;\beta} + \varpi^{D\,\beta'}_{\; \;\; \;\alpha'} \wedge \varpi^{D\,\alpha}_{\; \;\; \;\beta'}) \\
& = & \textstyle \sum_{\alpha,\beta=1}^n (-  \varpi^{D\,\alpha}_{\; \;\; \;\beta'} \wedge \varpi^{D\,\beta}_{\; \;\; \;\alpha} - \varpi^{D\,\alpha'}_{\; \;\; \;\beta'} \wedge \varpi^{D\,\beta}_{\; \;\; \;\alpha'}) = 0.
\end{eqnarray*}
Now using the structure equation, one has
$$
\textstyle \sum_{\alpha=1}^n \Omega^{D\,\alpha}_{\; \;\; \;\alpha'} = \sum_{\alpha=1}^n d  \varpi^{D\,\alpha}_{\; \;\; \;\alpha'} - \sum_{\alpha=1}^n \sum_{i=1}^{2n} \varpi^{D\,i}_{\; \;\; \;\alpha'} \wedge \varpi^{D\,\alpha}_{\; \;\; \;i} =  \sum_{\alpha=1}^n d  \varpi^{D\,\alpha}_{\; \;\; \;\alpha'}.
$$
From this, it follows that $r_D = \sum_{\alpha=1}^n  \wp^*\Omega^{D\,\alpha}_{\; \;\; \;\alpha'} = d  \left( \textstyle \sum_{\alpha=1}^n \wp^*  \varpi^{D\,\alpha}_{\; \;\; \;\alpha'}\right)$ is closed.

On the other hand, if $D$ is a $\Lie{U}(n)$-connection, then the matrix 
$$
\textstyle \left(\rho_D(e_i , e_j) \right)  =  \left( \sum_{\alpha = 1}^n \Omega^{D\,j}_{\;\;\;i}(\wp_\ast e_\alpha, \wp_\ast J e_\alpha)\right) 
$$
belongs to the Lie algebra $\lie{u}(n)\cong [\lambda^{1,1}]$ of $\Lie{U}(n)$.  

Since  $D$ is a metric connection, then  $D = \nabla + \xi^D$, where $\langle \xi^D_X Y , Z \rangle =  -\langle \xi^D_X Z , Y \rangle $. Moreover, the corresponding curvature tensors $R$ and $R^D$ are related by 
\begin{equation} \label{RRnabla}
R(X,Y) Z = R^D(X,Y) Z +  (D_X \xi^D)_Y Z -  (D_Y \xi^D)_X Z +  \xi^D_{\xi^D_XY} Z -  \xi^D_{\xi^D_YX} Z 
- [\xi^D_X , \xi^D_Y] Z.
\end{equation}
Therefore,
\begin{eqnarray} \label{rrD}
-2 r_\nabla(X,Y) & = &  -2 r_D(X,Y) + \langle (D_X \xi^D)_Y e_i, Je_i \rangle  -  \langle (D_Y \xi^D)_X e_i, Je_i \rangle \\
&&+ \langle   \xi^D_{\xi^D_XY - \xi^D_YX} e_i , J e_i \rangle - 2\langle \xi^D_X e_i , \xi^D_Y Je_i \rangle. \nonumber
\end{eqnarray}
In particular, when $D = \nabla^{\Lie{U}(n)}$, we will have
\begin{eqnarray*}
 r_\nabla(X,Y) & = &  r_{\nabla^{\Lie{U}(n)}}(X,Y) + \langle \xi_X e_i , \xi_Y Je_i \rangle. 
\end{eqnarray*}
From this, by using the properties of $\xi_{(a)}$, it follows the other  expression for $ r_\nabla$  in (ii).

For (iii), from \eqref{RRnabla} we have
\begin{equation*} 
 \rho_\nabla (X,Y)   =  \rho_{\nabla^{\Lie{U}(n)}}(X,Y)  -   \langle (\nabla^{\Lie{U}(n)}_{e_i} \xi)_{Je_i} X, Y \rangle   + \tfrac{n-1}{2} \langle  \xi_{J \theta } X, Y \rangle  + \langle \xi_{e_i} X , \xi_{Je_i} Y \rangle. 
\end{equation*}
Because $\nabla^{\Lie{U}(n)}$ is a $\Lie{U}(n)$-connection, 
$
 \rho_{\nabla \, [\lambda^{1,1}]} (X,Y)   =  \rho_{\nabla^{\Lie{U}(n)}}(X,Y)   + \langle \xi_{e_i} X , \xi_{Je_i} Y \rangle$. 
 Now, by using the properties of $\xi_{(a)}$, it follows the other   expression  for $ \rho_{\nabla [\lambda^{1,1}]}$ in (iii).
\end{proof}

\begin{remark}
{\rm  Note that in  case of a presence of an $\Lie{SU}(n)$-structure by \cite[Lemma 3.3]{Martin3}, one  has $r_{\nabla^{\Lie{U}(n)}} = - n d \widehat{\eta} $, where 
$\nabla^{\Lie{SU}(n)} =  \nabla^{\Lie{U}(n)} + \eta$ and $\eta_X Y = \widehat{\eta}(X) JY$, i.e. $\widehat{\eta} = J \eta$. Hence $r_{\nabla^{\Lie{U}(n)}}$  would be exact. Anyway, since  local $\Lie{SU}(n)$-structures always exist on an almost Hermitian manifold, it follows that $r_{\nabla^{\Lie{U}(n)}}$ is locally the exterior derivative  of a local one-form. This is an alternative argument confirming that $r_{\nabla^{\Lie{U}(n)}}$ is closed. 

 The first Chern class $c_1$ can be represented by $- \tfrac1{2\pi} r_{D}$ (in the de Rham cohomology group), where $D$ is any $\Lie{U}(n)$-connection. In particular, it is   by $- \tfrac1{2\pi} r_{\nabla^{\Lie{U}(n)}}$.  Note that in case of Kähler manifold, 
 $\nabla^{\Lie{U}(n)}  = \nabla$ and $r_{\nabla}(X,Y) = \Ric(X,JY)$. This has motivated the name  \textquoteleft Ricci form\textquoteright .
 On other hand, the first Chern class $c_1$ is associated to the tangent bundle $TM$ considered as a complex vector bundle (by means of  $J$). Thus $c_1(M) = c_1(T_\mathbb{C}M=TM) = - c_1(\Lambda^n (T_\mathbb{C}M)^*)$. The vanishing of $c_1$ is a necessary  condition for the existence of a complex volume form globally defined on $M$, i.e. existence of a $\Lie{SU}(n)$-structure. 
 }
\end{remark}

 In complex geometry there is a $\Lie{GL}(n, \mathbb{C})$-connection which plays a relevant a role, the \textit{Chern connection}. It is the unique   $\Lie{GL}(n, \mathbb{C})$-connection $\nabla^h$  such that its  torsion $T^h$ satisfies $T^h(JX,Y) = JT^h(X,Y)$. In Hermitian geometry $(\xi \in \mathcal{W}_3 \oplus \mathcal{W}_4)$, 
the Chern connection is given by $\nabla^h = \nabla + \xi^h$, where $\langle \xi^h_X Y , Z \rangle = \langle \xi_X Y + \xi_Y X, Z \rangle -  \langle \xi_Z X, Y \rangle$.  In almost Hermitian geometry, it is straightforward to check that $\nabla^h$ defined as before is a $\Lie{U}(n)$-connection if and only if $\xi \in \mathcal{W}_3 \oplus \mathcal{W}_4$. 

\begin{proposition} \label{RicciChern}
For  Hermitian manifolds, we have:\vspace{1mm}

\noindent $(i)$ $r_{\nabla^h} = r_{\nabla^{\Lie{U}(n)}} + \tfrac{n-1}{2} d J\theta$ . Moreover, $r_{\nabla^h} = r_{\nabla^{\Lie{U}(n)}[\lambda^{1,1}]} +  \tfrac{n-1}{2} (d J\theta)_{[\lambda^{1,1}]}$.  
\begin{align*} 
\hspace{-3mm} (ii) \; \, \rho_{\nabla^h}(X,Y)  = &   
\;  \rho_{\nabla[\lambda^{1,1}]}(X,Y) 
 -  \langle (\nabla^{\Lie{U}(n)}_{e_i} \xi_{(3)})_X Y , {J e_i}  \rangle 
 +  \langle (\nabla^{\Lie{U}(n)}_{e_i} \xi_{(3)})_Y X, {J e_i}  \rangle   
  \\
 &
  - \tfrac12 (d J \theta)_{[\lambda^{1,1}]} (X,Y) 
 + \tfrac12 d^*\theta \, \omega(X,Y) 
 + \tfrac{2n-1}{4} \|\theta \|^2 \omega(X,Y)
 + \tfrac14 \theta \wedge J \theta(X,Y)
 \\
 &
 + \tfrac{n}{2}  J \theta (\xi_{(3)X} Y - \xi_{(3)Y} X) 
- 2 \langle \xi_{(3)e_i} X  , \xi_{(3)Je_i} Y \rangle +   \langle \xi_{(3)X} e_i   , \xi_{(3)Y} {Je_i}  \rangle.
\end{align*}
\end{proposition}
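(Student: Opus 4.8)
The plan is to use that, for a Hermitian structure, both the Chern connection $\nabla^h$ and the minimal connection $\nabla^{\Lie{U}(n)}$ are $\Lie{U}(n)$-connections, so the structure-equation computation carried out in the proof of Proposition~\ref{riccigeneral} applies to each of them separately; the whole proposition is then a matter of comparing the two. For part~(i) I would begin from the identity $r_D = d\bigl(\sum_{\alpha=1}^{n}\wp^{*}\varpi^{D\,\alpha}_{\alpha'}\bigr)$ established there for any $\Lie{U}(n)$-connection $D$, in which $\sum_{\alpha}\varpi^{D\,\alpha}_{\alpha'}$ is the one-form $X\mapsto\sum_{\alpha}\langle D_X e_\alpha, Je_\alpha\rangle$. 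Taking the difference of this one-form for $D=\nabla^h$ and $D=\nabla^{\Lie{U}(n)}$, the common Levi-Civita contribution cancels and one is left with $\tfrac12\sum_i\langle(\xi^h-\xi)_X e_i, Je_i\rangle$, so that $r_{\nabla^h}-r_{\nabla^{\Lie{U}(n)}}$ is automatically exact.

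The key computation for (i) is then $\sum_i\langle\xi_{e_i}X, Je_i\rangle=\tfrac{n-1}{2}(J\theta)(X)$, which follows from $\xi_{e_i}Je_i=-J\xi_{e_i}e_i$ together with $\sum_i\xi_{e_i}e_i=\tfrac{n-1}{2}\theta^{\sharp}$. Since $\sum_i\langle\xi^h_X e_i, Je_i\rangle=\sum_i\langle\xi_X e_i, Je_i\rangle+2\sum_i\langle\xi_{e_i}X, Je_i\rangle$ by the definition of $\xi^h$, the difference of the two trace one-forms equals $\tfrac{n-1}{2}J\theta$, whence $r_{\nabla^h}-r_{\nabla^{\Lie{U}(n)}}=\tfrac{n-1}{2}\,dJ\theta$. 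For the refinement I would invoke that for an integrable $J$ the Chern curvature is of type $(1,1)$, so $r_{\nabla^h}\in[\lambda^{1,1}]$; projecting the identity just obtained onto $[\lambda^{1,1}]$ yields $r_{\nabla^h}=r_{\nabla^{\Lie{U}(n)}[\lambda^{1,1}]}+\tfrac{n-1}{2}(dJ\theta)_{[\lambda^{1,1}]}$.

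For part~(ii) I would apply the curvature comparison \eqref{RRnabla} with $D=\nabla^h$, $\xi^D=\xi^h$, and contract it in the pattern defining $\rho$ (curvature arguments $(e_i,Je_i)$ acting on $X$, paired with $Y$, summed over $i$, with factor $-\tfrac12$). This gives $\rho_\nabla(X,Y)=\rho_{\nabla^h}(X,Y)+C(X,Y)$, where $C$ collects the contracted terms in $\nabla^h\xi^h$ and in $\xi^h\otimes\xi^h$. Solving for $\rho_{\nabla^h}=\rho_\nabla-C$ and using that $\rho_{\nabla^h}\in[\lambda^{1,1}]$ by Proposition~\ref{riccigeneral}(i), I would project the whole identity onto $[\lambda^{1,1}]$, turning $(\rho_\nabla)_{[\lambda^{1,1}]}$ into $\rho_{\nabla[\lambda^{1,1}]}$, so that it only remains to evaluate $-C_{[\lambda^{1,1}]}$. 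To this end I would substitute into $C$ the $\xi_{(3)}$- and $\xi_{(4)}$-parts coming from $\langle\xi^h_X Y,Z\rangle=\langle\xi_X Y+\xi_Y X,Z\rangle-\langle\xi_Z X,Y\rangle$ with $\xi=\xi_{(3)}+\xi_{(4)}$, together with the explicit $4\xi_{(4)X}Y=\langle X,Y\rangle\theta^{\sharp}-\theta(Y)X-\langle JX,Y\rangle J\theta^{\sharp}+(J\theta)(Y)JX$. The pure-$\xi_{(3)}$ contractions then produce the terms $-2\langle\xi_{(3)e_i}X,\xi_{(3)Je_i}Y\rangle+\langle\xi_{(3)X}e_i,\xi_{(3)Y}Je_i\rangle$ and $\tfrac{n}{2}J\theta(\xi_{(3)X}Y-\xi_{(3)Y}X)$, the pure-$\theta$ contractions produce the $\tfrac{2n-1}{4}\|\theta\|^2\omega$ and $\tfrac14\theta\wedge J\theta$ terms, and the derivative part of $C$, after rewriting the $\nabla^h$-derivatives in terms of $\nabla^{\Lie{U}(n)}$, supplies the divergence terms in $\xi_{(3)}$ together with $-\tfrac12(dJ\theta)_{[\lambda^{1,1}]}$ and $\tfrac12\,d^{*}\theta\,\omega$. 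Throughout I would use $\sum_i\xi_{(3)e_i}e_i=0$ (the vanishing Lee form of $\mathcal{W}_3$) to discard the contractions that would otherwise spoil the match.

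The main obstacle is precisely the bookkeeping in part~(ii): expanding $\xi^h$ into its $\xi_{(3)}$- and $\theta$-pieces generates a large number of quadratic contractions and cross terms whose coefficients and signs must be tracked carefully, and the conversion of the $\nabla^h$-covariant derivatives of $\xi^h$ into $\nabla^{\Lie{U}(n)}$-derivatives of $\theta$ (which itself creates further quadratic torsion terms) is the delicate step, since it is this conversion that must reassemble into the global one-forms $(dJ\theta)_{[\lambda^{1,1}]}$ and $d^{*}\theta$. Part~(i) is short once the trace one-form is identified, and the only representation-theoretic input needed anywhere is the vanishing of the $\mathcal{W}_3$ Lee form and the projection onto $[\lambda^{1,1}]$ used to replace $\rho_\nabla$ by $\rho_{\nabla[\lambda^{1,1}]}$.
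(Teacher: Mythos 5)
Your proposal is correct, and for part (ii) it follows essentially the paper's own route: apply \eqref{RRnabla} with $D=\nabla^h$, contract in the $\rho$-pattern, use $\rho_{\nabla^h}\in[\lambda^{1,1}]$ (valid by Proposition \ref{riccigeneral}(i) since $\nabla^h$ is a $\Lie{U}(n)$-connection in the Hermitian case) to replace $\rho_\nabla$ by $\rho_{\nabla[\lambda^{1,1}]}$, then expand $\xi^h$ through $\xi_{(3)}$ and $\theta$ — the paper's proof of (ii) is exactly this, stated even more tersely. Where you genuinely diverge is part (i). The paper obtains $r_{\nabla^h}-r_{\nabla^{\Lie{U}(n)}}=\tfrac{n-1}{2}\,dJ\theta$ by running the curvature comparison \eqref{rrD} for $D=\nabla^h$, computing the three contractions explicitly (in particular checking $\langle \xi^h_X e_i,\xi^h_Y Je_i\rangle=\langle \xi_X e_i,\xi_Y Je_i\rangle$ so the quadratic terms cancel against Proposition \ref{riccigeneral}(ii)); you instead difference the local potentials $\sum_\alpha\wp^*\varpi^{D\,\alpha}_{\;\;\alpha'}$ of the two $\Lie{U}(n)$-connections, reducing everything to the single trace $\sum_i\langle\xi_{e_i}X,Je_i\rangle=\tfrac{n-1}{2}(J\theta)(X)$ and making exactness of the difference manifest. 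That is cleaner, though you should pin down the sign and normalization relating $\sum_\alpha\varpi^{D\,\alpha}_{\;\;\alpha'}$ to the trace $\langle D e_i,Je_i\rangle$ under the paper's convention $R_{X,Y}=\nabla_{[X,Y]}-[\nabla_X,\nabla_Y]$. For the refined identity in (i) you import the classical fact that the Chern curvature of an integrable structure is of type $(1,1)$, hence $r_{\nabla^h}\in[\lambda^{1,1}]$; the paper instead derives this internally from Proposition \ref{skewricac}(ii)(a) (ultimately from the $d^2\omega=0$ identities), showing $r_{\nabla^{\Lie{U}(n)}\lcf\lambda^{2,0}\rcf}=-\tfrac{n-1}{2}(dJ\theta)_{\lcf\lambda^{2,0}\rcf}$ so that the $\lcf\lambda^{2,0}\rcf$-parts cancel and the $(1,1)$-type of $r_{\nabla^h}$ comes out as a byproduct. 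Your shortcut is legitimate but relies on input external to the paper; otherwise the two arguments deliver the same statement.
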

\begin{remark}{\rm
Because the difference $ r_{\nabla^h} - r_{\nabla^{\Lie{U(n)}}}$ is an  exact two-form,  the first Chern class  is determined by $-\tfrac{1}{2\pi} r_{\nabla^h}$ or  by $ -\tfrac{1}{2\pi} r_{\nabla^{\Lie{U(n)}}}$   as it is expected.}
\end{remark}
\begin{proof}
We will use  \eqref{rrD}. Thus, we obtain the following identities by direct computation
\begin{gather*}
\langle (\nabla^h_X \xi^h)_Y e_i ,  Je_i\rangle - \langle (\nabla^h_Y \xi^h)_X e_i ,  Je_i\rangle  = 
  (n-1) dJ\theta(X,Y) + 2 (n-1) \langle \xi_{J\theta} X, Y \rangle, \\
\langle \xi^h_{\xi^h_XY} e_i , J e_i \rangle - \langle \xi^h_{\xi^h_YX} e_i , J e_i \rangle  =  - 2 (n-1) \langle \xi_{J \theta}X,Y\rangle, \qquad 
 \langle \xi^h_X e_i, \xi^h_Y J e_i \rangle  =  \langle \xi_X e_i, \xi_Y J e_i \rangle. 
\end{gather*}
Hence we have 
$
r_\nabla(X,Y) = r_{\nabla^h}(X,Y) - \tfrac{n-1}{2} d J \theta + \langle \xi_X e_i, \xi_Y J e_i \rangle
$
 Now using Proposition \ref{riccigeneral} (ii), we get $r_{\nabla^h} = r_{\nabla^{\Lie{U}(n)}} + \tfrac{n-1}{2} d J \theta$. Finally, since $\xi \in \mathcal{W}_3 \oplus \mathcal{W}_4$, using Proposition \ref{skewricac} (ii) and the properties of $\xi$, we obtain 
$$ 
r_{\nabla^{\Lie{U}(n)}\lcf \lambda^{2,0} \rcf}(X,Y) = r_{\nabla\lcf \lambda^{2,0} \rcf}(X,Y) =  \tfrac{n-1}{2} (d\theta)_{\lcf \lambda^{2,0}\rcf}(X,JY) = -  
 \tfrac{n-1}{2} (dJ\theta)_{\lcf \lambda^{2,0}\rcf}(X,Y).   
$$
Then $r_{\nabla^h}= r_{\nabla^{\Lie{U}(n)}[\lambda^{1,1}]} + \tfrac{n-1}{2} (dJ\theta)_{[ \lambda^{1,1}]}(X,Y)$. 

For (ii), it is used the identity \eqref{RRnabla} for $\nabla^h$, the facts that $\rho_{\nabla^h}$ is in $[\lambda^{1,1}]$ and $\nabla^h = \nabla^{\Lie{U}(n)} - \xi + \xi^h$, the definition of $\xi^h$ and the properties of the components $\xi_{(i)}$ of $\xi$.  
\end{proof}

\subsection{Some components in the space of Kähler curvatures}
As it was mentioned before, we will derive expressions for the tensor $(\Ric + 3 \Ricac)_{[\lambda^{1,1}]}$. This tensor determines the components of the curvature in $\mathcal{K}_1\cong \mathbb{R}$ and $\mathcal{K}_2 \cong [\lambda^{1,1}_0]$. 
\begin{proposition} \label{unomastresric}
For almost hermitian manifolds, we have
\begin{equation*} 
    \begin{split}
    \tfrac12  ( \Ric + 3 \Ricac )_{[\lambda^{1,1}]} (X,Y)
      &  =  - 2 \textstyle  r_{\nabla^{\Lie{U}(n)}[\lambda^{1,1}]}(X,JY) 
          -  \inp{(\Nt_{e_i}\xi_{(3)})_X Y}{e_i} 
          \\
          &
          \quad \,     
          - \tfrac{n-2}{4} ( ( \nabla_X \theta ) ( Y)  + ( \nabla_{JX} \theta ) ( JY))
          + \tfrac14(d^* \theta + \tfrac{2n-7}2 \|\theta \|^2 )  \langle X, Y \rangle 
  \\
  &\quad \,     
   -  2 \inp{\xi_{(2) X} e_i}{\xi_{(2)Y}e_i} 
    -  \langle \xi_{(2) e_i} X , \xi_{(2) e_i } Y \rangle 
     + 2 \langle \xi_{(3)X} e_i , \xi_{(3)Y} e_i \rangle
\\
&
\quad \,  
  - \tfrac{n-6}{8} (\theta(X) \theta(Y) + \theta(JX) \theta(JY))
  +  \inp{\xi_{(1) X} e_i}{\xi_{(2)Y}e_i}
  \\
  &
 \quad \,   + \tfrac52  \inp{\xi_{(1)Y}  e_i}{\xi_{(2)X}e_i} 
+\tfrac{n-6}{2} \theta (\xi_{(3)X} Y) - 2 \theta (\xi_{(3)Y} X).
                  \end{split}
  \end{equation*}
\end{proposition}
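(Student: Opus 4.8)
The plan is to derive the tensor $\tfrac12(\Ric+3\Ricac)_{[\lambda^{1,1}]}$ by combining the already-established symmetric $[\lambda^{1,1}]$-expressions for $\Ric-\Ricac$ and for the Ricci forms. The starting point is the algebraic observation that $\Ric+3\Ricac = 4\Ricac + (\Ric - \Ricac)$, so that the symmetric $[\lambda^{1,1}]$-component can be written as
\begin{equation*}
\tfrac12(\Ric+3\Ricac)_{[\lambda^{1,1}]} = 2\,\Ricac_{[\lambda^{1,1}]} + \tfrac12(\Ric-\Ricac)_{[\lambda^{1,1}]}.
\end{equation*}
The second summand is given explicitly by \eqref{ricminusricacuno}, so the real work is to produce a usable expression for $\Ricac_{[\lambda^{1,1}]}$ in terms of the intrinsic torsion. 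For this I would exploit the identity $\Ricac(X,JY)=\rho_\nabla(X,Y)$ from Proposition \ref{riccigeneral}(ii): the symmetric part of $\Ricac$ lives in $[\lambda^{1,1}]$ and corresponds, after the $J$-twist, precisely to the $[\lambda^{1,1}]$-part of the first Ricci form $\rho_\nabla$. Thus $\Ricac_{[\lambda^{1,1}]}(X,Y) = -\,\rho_{\nabla[\lambda^{1,1}]}(X,JY)$, and I would feed in the explicit formula for $\rho_{\nabla[\lambda^{1,1}]}$ from Proposition \ref{riccigeneral}(iii).

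Carrying this out, the $\rho_{\nabla^{\Lie{U}(n)}}$ term becomes the leading $-2\,r_{\nabla^{\Lie{U}(n)}[\lambda^{1,1}]}(X,JY)$ contribution (here using that on a $\Lie{U}(n)$-connection the first and second Ricci forms of the minimal connection are related as in Proposition \ref{riccigeneral}). The quadratic-in-torsion terms of Proposition \ref{riccigeneral}(iii), namely $\sum_a\langle\xi_{(a)e_i}X,\xi_{(a)Je_i}Y\rangle$, the mixed cross-terms $\langle\xi_{(a)e_i}X,\xi_{(b)Je_i}Y\rangle$, the $\|\theta\|^2\,\omega$ term, the $J\theta(\xi_{(3)X}Y-\xi_{(3)Y}X)$ term and the $\theta\wedge J\theta$ term, must each be rewritten after applying the substitution $Y\mapsto JY$ and using $J$-type symmetries of the components $\xi_{(a)}$ (for instance $\xi_{(a)JX}=-J\xi_{(a)X}$ for $\xi_{(1)},\xi_{(3)}$ and the analogous relations for $\xi_{(2)}$). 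Combining these with the matching quadratic terms already present in \eqref{ricminusricacuno} is what produces the collected coefficients $-2\langle\xi_{(2)X}e_i,\xi_{(2)Y}e_i\rangle$, $-\langle\xi_{(2)e_i}X,\xi_{(2)e_i}Y\rangle$, $2\langle\xi_{(3)X}e_i,\xi_{(3)Y}e_i\rangle$ and the asymmetric torsion-products with coefficients $1$ and $\tfrac52$ appearing in the statement.

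The linear derivative and scalar terms are then assembled by adding the corresponding pieces: the $-\tfrac{n-2}{2}$ Hessian term and the $\tfrac12\langle X,Y\rangle(d^*\theta + \tfrac{2n-3}{2}\|\theta\|^2)$ term of \eqref{ricminusricacuno} get halved and merged with the scalar $\|\theta\|^2$ contributions coming from $\rho_{\nabla[\lambda^{1,1}]}$, yielding the stated $-\tfrac{n-2}{4}$ Hessian coefficient and the combined $\tfrac14(d^*\theta + \tfrac{2n-7}{2}\|\theta\|^2)\langle X,Y\rangle$ scalar term. Likewise the two quadratic-Lee-form terms combine to give the coefficient $-\tfrac{n-6}{8}$ on $\theta(X)\theta(Y)+\theta(JX)\theta(JY)$, and the $\theta(\xi_{(3)}\cdot\,\cdot)$ terms collect to the asymmetric coefficients $\tfrac{n-6}{2}$ and $-2$. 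The main obstacle, and the only genuinely laborious step, is the bookkeeping required to convert every quadratic torsion expression under $Y\mapsto JY$ into the normalized frame-contracted form and to verify that the skew-symmetric residues cancel (they must, since both $\Ricac_{[\lambda^{1,1}]}$ and $(\Ric-\Ricac)_{[\lambda^{1,1}]}$ are symmetric); once the symmetries of the $\xi_{(a)}$ are applied consistently this is routine, so I would organize the computation by treating the pure terms $\xi_{(a)}\otimes\xi_{(a)}$, the cross terms $\xi_{(a)}\otimes\xi_{(b)}$, and the $\theta$-linear/quadratic terms in three separate passes.
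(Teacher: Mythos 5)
Your overall strategy is exactly the paper's: the proof there is the one\nobreakdash-line computation of $4\Ricac_{[\lambda^{1,1}]} + (\Ric-\Ricac)_{[\lambda^{1,1}]}$, with the second summand taken from \eqref{ricminusricacuno} and the first from Proposition \ref{riccigeneral}. The one place you diverge, and where your argument as written would go wrong, is the treatment of $\Ricac_{[\lambda^{1,1}]}$. The paper feeds in Proposition \ref{riccigeneral}(ii), i.e.\ the expansion of $r_\nabla(X,Y)=\Ricac(X,JY)$ in terms of $r_{\nabla^{\Lie{U}(n)}}$, which produces the leading term $-2\,r_{\nabla^{\Lie{U}(n)}[\lambda^{1,1}]}(X,JY)$ directly. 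You instead route through $\rho_{\nabla[\lambda^{1,1}]}$ and Proposition \ref{riccigeneral}(iii), whose leading term is $\rho_{\nabla^{\Lie{U}(n)}}$, and then assert that the two Ricci forms of the minimal connection ``are related as in Proposition \ref{riccigeneral}.'' No such relation is stated there, and $\rho_{\nabla^{\Lie{U}(n)}}\neq r_{\nabla^{\Lie{U}(n)}[\lambda^{1,1}]}$ in general: in the paper's first example $r_{\nabla^{\Lie{U}(2)}[\lambda^{1,1}]}=\tfrac12 e^{24}+\tfrac14(e^{34}+e^{12})$ while $\rho_{\nabla^{\Lie{U}(2)}}=-\tfrac38 e^{31}-\tfrac18 e^{42}+\tfrac34 e^{12}+\tfrac34 e^{34}$. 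The correct conversion follows by combining (ii) and (iii) through the Levi--Civita identity $\rho_\nabla=r_\nabla$, which gives $\rho_{\nabla^{\Lie{U}(n)}}(X,Y)=r_{\nabla^{\Lie{U}(n)}[\lambda^{1,1}]}(X,Y)+\bigl(\langle\xi_X e_i,\xi_Y Je_i\rangle\bigr)_{[\lambda^{1,1}]}-\langle\xi_{e_i}X,\xi_{Je_i}Y\rangle$; the two quadratic corrections do not cancel, so if you simply swap $\rho_{\nabla^{\Lie{U}(n)}}(X,JY)$ for $r_{\nabla^{\Lie{U}(n)}[\lambda^{1,1}]}(X,JY)$ the quadratic coefficients in the final formula will not come out as stated. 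The repair is either to carry those extra terms through your three bookkeeping passes, or---more economically, and as the paper does---to use part (ii) from the start; the rest of your plan (halving the linear and scalar terms of \eqref{ricminusricacuno}, collecting the $\theta$\nobreakdash-quadratic and $\xi_{(a)}\otimes\xi_{(b)}$ terms) is then the same routine verification.
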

\begin{proof}
This identity directly follows  by computing $4\Ricac_{[\lambda{1,1}]}
 + (\Ric - \Ricac)_{[\lambda^{1,1}]}$, using Proposition \ref{riccigeneral} (ii) and the identity \eqref{ricminusricacuno}.
\end{proof}

The $\mathcal{K}_1$-component of the curvature is determined by the metric contraction of $\Ric + 3 \Ricac$.
\begin{corollary} \label{unomastrescor}
 For almost Hermitian manifolds we have
$$
s + 3 s^* =  8 \langle   r_{\nabla^{\Lie{U}(n)}} , \omega \rangle  + 2 (n-1) d^* \theta + (n-3)(n-1)  \|\theta \|^2 - 6 \| \xi_{(2)} \|^2 + 4 \| \xi_{(3)} \|^2 
$$
\end{corollary}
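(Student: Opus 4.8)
The plan is to derive the scalar identity by taking the metric trace of the tensorial identity in Proposition \ref{unomastresric}. The first point to record is that, under $\Lie{U}(n)$, the contraction $\sum_j T(e_j,e_j)$ of a two-tensor $T$ annihilates its skew part (which lies in $\lcf\lambda^{2,0}\rcf$) and its $J$-anti-invariant symmetric part (in $\lcf\sigma^{2,0}\rcf$): indeed for a symmetric $T$ with $T(Je_j,Je_j)=-T(e_j,e_j)$ one has $\sum_j T(e_j,e_j)=\sum_j T(Je_j,Je_j)=-\sum_j T(e_j,e_j)$, forcing the trace to vanish, and similarly the tracefree piece $[\lambda^{1,1}_0]$ contributes nothing. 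Hence only the scalar $\mathbb{R}$-summand survives, and since it sits inside $[\lambda^{1,1}]$ we get $s+3s^{*}=\sum_j(\Ric+3\Ricac)(e_j,e_j)=\sum_j(\Ric+3\Ricac)_{[\lambda^{1,1}]}(e_j,e_j)$. So it suffices to set $X=Y=e_j$ in Proposition \ref{unomastresric}, sum over $j$, and multiply by two.

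Next I would contract the right-hand side term by term, invoking a handful of elementary identities. The first term requires $\sum_j r_D(e_j,Je_j)=-2\langle r_D,\omega\rangle$, which follows at once from $\omega(X,Y)=\langle X,JY\rangle$ and the definition of the inner product on two-forms, together with $\langle r_{\nabla^{\Lie{U}(n)}[\lambda^{1,1}]},\omega\rangle=\langle r_{\nabla^{\Lie{U}(n)}},\omega\rangle$ since $\lcf\lambda^{2,0}\rcf\perp\omega$; this yields the term $\langle r_{\nabla^{\Lie{U}(n)}},\omega\rangle$. The divergence term $\langle(\Nt_{e_i}\xi_{(3)})_{e_j}e_j,e_i\rangle$ vanishes, because $\sum_j\xi_{(3)e_j}e_j=0$ (only $\xi_{(4)}$ carries the trace of $\xi$, by $\Lie{U}(n)$-equivariance and Schur) and the metric connection $\Nt$ commutes with contraction; the same trace-freeness kills the two terms $\theta(\xi_{(3)e_j}e_j)$. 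The terms in $\nabla\theta$ collapse through $\sum_j(\nabla_{e_j}\theta)(e_j)=-d^{*}\theta$ and its $J$-invariant reindexing $\sum_j(\nabla_{Je_j}\theta)(Je_j)=-d^{*}\theta$, while the scalar term carries the factor $\sum_j\langle e_j,e_j\rangle=2n$. The quadratic torsion terms in $\xi_{(2)}$ combine to $-3\norm{\xi_{(2)}}^2$ and those in $\xi_{(3)}$ to $2\norm{\xi_{(3)}}^2$ by the definition $\norm{\xi_{(a)}}^2=\langle\xi_{(a)e_i}e_j,\xi_{(a)e_i}e_j\rangle$, the two $\theta\otimes\theta$ terms give $\norm{\theta}^2$, and the mixed terms $\langle\xi_{(1)e_j}e_i,\xi_{(2)e_j}e_i\rangle$ vanish because $\mathcal{W}_1$ and $\mathcal{W}_2$ are inequivalent, hence orthogonal, $\Lie{U}(n)$-modules.

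Finally I would collect coefficients: the two $d^{*}\theta$ contributions add to $(n-1)d^{*}\theta$, and the two $\norm{\theta}^2$ contributions, namely $\tfrac{n(2n-7)}{4}$ from the $\langle X,Y\rangle$ term and $-\tfrac{n-6}{4}$ from the $\theta\otimes\theta$ term, combine to $\tfrac{(n-1)(n-3)}{2}\norm{\theta}^2$; doubling the whole expression reproduces the stated formula exactly. The only genuine content is the first-step observation that the metric trace detects only the scalar part of the $[\lambda^{1,1}]$-component, together with the Ricci-form contraction $\sum_j r_{\nabla^{\Lie{U}(n)}}(e_j,Je_j)=-2\langle r_{\nabla^{\Lie{U}(n)}},\omega\rangle$; the remaining steps are bookkeeping, and the main place where one can slip is the coefficient arithmetic for $\norm{\theta}^2$.
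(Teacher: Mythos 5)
Your proof is correct and is precisely the argument the paper intends: the corollary is stated immediately after Proposition \ref{unomastresric} as its metric contraction, and your term-by-term trace (including the key identities $\sum_j\alpha(e_j,Je_j)=-2\langle\alpha,\omega\rangle$, $\sum_j\xi_{(3)e_j}e_j=0$, $\mathcal{W}_1\perp\mathcal{W}_2$, and the coefficient arithmetic $\tfrac{n(2n-7)}{4}-\tfrac{n-6}{4}=\tfrac{(n-1)(n-3)}{2}$) checks out and reproduces the stated formula after doubling.
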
 
Thus, from the identity \eqref{difsca} and the previous one, the following expressions for the scalar curvatures are obtained
\begin{eqnarray*}  
s & =  & 2 \langle   r_{\nabla^{\Lie{U}(n)}} , \omega \rangle + 2 (n-1) d^* \theta + \tfrac12 (2n-3)(n-1)  \|\theta \|^2 + 3 \| \xi_{(1)} \|^2 - 3 \| \xi_{(2)} \|^2 + \| \xi_{(3)} \|^2,\\  
s^* \hspace{-1mm} & = & 2  \langle   r_{\nabla^{\Lie{U}(n)}} , \omega \rangle -   \tfrac12 (n-1)  \|\theta \|^2 - \| \xi_{(1)} \|^2 -  \| \xi_{(2)} \|^2 + \| \xi_{(3)} \|^2. 
\end{eqnarray*}

\section{Examples}
In this section we will display some examples showing that the components $d \theta_{[\lambda^{1,1}_{0}]}$ and $d \theta_{\real{\lambda^{2,0}}}$ of $d\theta$ can be non-zero. Also these examples will illustrate the formulae proved in the previous Section. For sake of simplicity, we will denote the wedge product by just  juxtaposition of superindices, i.e. $e^{ij} = e^i \wedge e^j$.  We also note that our convention for Nijenhuis tensor is $N(X,Y) = [X,Y] + J[JX,Y] + J[X,JY] -[JX,JY]$. We recall that the vanishing of $N$ characterizes the type $\mathcal{W}_3 \oplus \mathcal{W}_4$, called Hermitian structure.

 \begin{example}{\rm Let $G$ be the four-dimensional simply-connected real solvable Lie group 
 determined by the  Lie algebra, displayed in \cite{Graaf} and  denoted by $M^3_0$ there, generated by  the basis 
$\{e^1, \ldots , e^4\}$  of left-invariant one-forms  such that
$$
de^1 =   e^{14}, \quad de^2  = 0, \quad d e^3 =  e^{24} + e^{34}, \qquad  d e^4 =0. 
$$

On $G$ we consider the almost Hermitian structure such that $\langle \cdot , \cdot  \rangle = \sum_{i=1}^4 e^i \otimes e^i$  is the metric  and $\omega= e^{31} + e^{42}$ is the  Kähler form.

It is straightforward to check that the Nihenjuis tensor of the almost complex structure $J$ is zero. On the other hand, one has the exterior derivative 
$d \omega   =   (-e^1 -2 e^4)  \wedge \omega$. Therefore, the Lee form is $\theta=  -e^1 -2 e^4$ and we are in the presence of a Hermitian structure (type $\mathcal{W}_4$ in the four-dimensional case). The non-zero components of $d\theta$ are given by 
$$
(d \theta)_{[\lambda^{1,1}_0]} = - \tfrac12 (e^{14} + e^{23}), \qquad (d \theta)_{\real{\lambda^{2,0}}} = -\tfrac12 (e^{14} - e^{23}).    
$$
 Because $n=2$, the identities given in Proposition \ref{divergenciaunouno} are trivial. For dimension $4$, one has  $\mathcal{W}_1 \oplus \mathcal{W}_4 = \mathcal{W}_4$ and, in this example, $d \theta_{[\lambda^{1,1}_0]} \neq0$ and  $(d \theta)_{\real{\lambda^{2,0}}} \neq 0$. 
Finally, we have 
$$
s - s^* = 2 d^*\theta + \| \theta \|^2 = 9, \qquad  (\Ric - \Ric^*)_{\mathbb{R}} = \tfrac94 \langle \cdot , \cdot \rangle, \qquad    (\Ric - \Ric^*)_{[\lambda^{1,1}_0]} =0,
$$
$$
\Ric^*_{\lcf \lambda^{2,0}\rcf} = \tfrac12 (d \theta)_{\real{\lambda^{2,0}}} = -\tfrac14 (e^{14} - e^{23}).
$$
If  $\ast$ is the Hodge star operator with respect to $\mathrm{Vol} = - \frac12 \omega^2= e^{1234}$, $d^* \theta = - \ast d \ast \theta=  2$.
Now, we fix the complex volume form $\psi_+ + i \psi_-$, where $\psi_+ = e^{12} - e^{34}$ and $\psi_- = e^{32} + e^{14}$. For this $\Lie{SU}(2)$-structure, we have
$$
4  \eta   - \theta = \ast ( \ast d\psi_+ \wedge \psi_+ + \ast d\psi_- \wedge \psi_-) = 2 e^4.  
$$
Then $\widehat{\eta} = J \eta =  - \tfrac14 e^3$ and it is globally defined. Thus, $r_{\nabla^{\Lie{U}(2)}} = - 2 d \widehat{\eta} =  \tfrac12 (e^{24} + e^{34})$ is exact. Hence, as it is expected by the existence of the $\Lie{SU}(2)$-structure, the first Chern class  vanishes $c_1=0$.  The Ricci form 
 of the Chern connection is given by 
$
r_{\nabla^{h}} = r_{\nabla^{\Lie{U}(2)}} + \tfrac12 d J\theta = 0$.
The Lie brackets are given by  $[e_1, e_4] = -e_1$, $[e_2, e_4] = -e_3$, $[e_3, e_4] = -e_3$ and $[e_i, e_j] = 0$, for the remaining pairs $(i,j)$. By  Koszul's formula, the Levi Civita covariant derivatives are derived
\begin{gather*}
\nabla_{e_1} e_1 = \nabla_{e_3} e_3 = e_4, \quad \nabla_{e_1} e_4 = -e_1, \quad  \nabla_{e_2} e_3 = \nabla_{e_3} e_2 = \tfrac12 e_4,\\
\nabla_{e_2} e_4 = - \nabla_{e_4} e_2 = - \tfrac12 e_3, \quad \nabla_{e_3} e_4  = -\tfrac12 e_2 - e_3, \quad \nabla_{e_4} e_3 = - \tfrac12 e_2 
\end{gather*}
and $\nabla_{e_i} e_j =0$, for the remaining pairs $(i,j)$. These are needed in  Proposition \ref{ricsigma} to obtain the following component of the Ricci tensor
$$
\Ric_{\lcf 	\sigma^{2,0} \rcf} = \tfrac14 ( e^1 \otimes e^4 + e^4 \otimes e^1 + e^2 \otimes e^3 + e^3 \otimes e^2).
$$

Since $r_{\nabla^{\Lie{U}(2)}[\lambda^{1,1}]} =  \tfrac12 e^{24} + \tfrac14 (e^{34} + e^{12})$ and we have all the information required in Proposition \ref{unomastresric} to compute  the following Kähler component of the curvature   
\begin{eqnarray*}
(\Ric + 3 \Ric^*)_{[\lambda^{1,1}]} & = & 
  - \tfrac{7}{4} e^1 \otimes e^1 - \tfrac{3}{4} e^2 \otimes e^2 
 - \tfrac{7}{4} e^3 \otimes e^3 - \tfrac{3}{4} e^4 \otimes e^4
 \\
 && 
 +3 e^1 \otimes e^4 + 3 e^4 \otimes e^1 - 3  e^2 \otimes e^3 -  3 e^3 \otimes e^2.
\end{eqnarray*}
As a consequence (or by Corollary \ref{unomastrescor}), for the scalar curvatures it is obtained
$$
s+ 3 s^* = -5, \qquad  s = \tfrac{11}{2}, \qquad s^* = - \tfrac{7}{2}.
$$

Since one has the identities $\rho_\nabla = r_\nabla = - J_{(2)} \Ricac$, we obtain
$$
 \rho_\nabla =  r_\nabla =  \Ricac(\cdot , J \cdot)= -  e^{31} - \tfrac{3}{4} e^{42} + \tfrac12  e^{12} + \tfrac32 e^{34} 
$$
This form is not closed, 
$
d \rho_\nabla = - 2 e^{134} -  \tfrac32  e^{124} 
$.  Now  we use Proposition \ref{riccigeneral} (iii), to compute the first Ricci form of $\nabla^{\Lie{U}(2)}$ which is given by 
$$
 \rho_{\nabla^{\Lie{U}(2)}}  = - \tfrac38  e^{31} - \tfrac18  e^{42} + \tfrac34 e^{12} + \tfrac34 e^{34}.
$$
It is  in $[\lambda^{1,1}]$ but its exterior derivative is non-zero, $d \rho_{\nabla^{\Lie{U}(2)}}  = - \tfrac98  e^{124}- \tfrac34  e^{134} \neq 0 $. Likewise, by using  Proposition \ref{RicciChern} (ii),  the first Ricci form 
of  the Chern connection is given by 
$$
\rho_{\nabla^h} = \tfrac72  e^{31}  -\tfrac52  e^{42}  + \tfrac12  e^{12} + \tfrac12  e^{34}.
$$
It is also in $[\lambda^{1,1}]$ and has non-zero exterior derivative,  $d \rho_{\nabla^h}  = 3 e^{124} + \tfrac72  e^{134} \neq 0 $.
}
 \end{example}

\begin{example}{\rm 
In  \cite{Hasegawa}, Hasegawa determined all the complex surfaces  which are diffeomorfic to compact solvmanifolds. Some of them are known as Inoue surfaces \cite{Inoue} of type $S^+$ or $S^-$. These can be written (up to finite covering) as $\Gamma \backslash G$, where $\Gamma$ is a lattice of a simply connected solvable Lie group $G$ (see   \cite{Hasegawa} for details). The Lie algebra $\mathfrak{g}$ of $G$ is expressed as having a basis $\{ X_1, X_2, X_3, X_4\}$ with the bracket multiplication
$
[X_2, X_3] = - X_1$,  $[X_4, X_2] =  X_2$, $[X_4, X_3] = - X_3
$
and all other brackets vanish. The almost complex structure $J$ is defined by 
$$
JX_1 = X_2, \quad -JX_2 = X_1, \quad JX_3 = X_4 - qX_2, \quad JX_4 = -X_3 - qX_1, \qquad q \in \mathbb R,
$$
for which the Nijenhuis tensor  vanishes. We will consider the metric $\langle \cdot , \cdot \rangle$ that makes orthonormal the basis
$$
e_1 = X_1, \quad e_2 = X_2, \quad e_3 = X_3, \quad e_4 = X_4- q X_2.
$$
Such a  metric is compatible with the complex structure $J$, then we are in the presence of a Hermitian structure (type $\mathcal{W}_4$ in the four-dimensional case). For the basis $\{ e_1, e_2, e_3, e_4\}$, one has
$$
[e_2, e_3] = - e_1, \quad [e_2, e_4] =  - e_2, \qquad [e_3, e_4] =  - q e_1+ e_3 ,
$$ 
and all other brackets vanish. The corresponding basis 
$\{e^1, e^2, e^3 , e^4\}$  of left-invariant one-forms  are such that
$$
de^1 =   e^{23} +q e^{34}, \quad de^2  = e^{24}, \quad d e^3 =  - e^{34}, \qquad  d e^4 =0. 
$$ 
The Hermitian metric is expressed as  $\langle \cdot , \cdot  \rangle = \sum_{i=1}^4 e^i \otimes e^i$  and the Kähler two-form  as  $\omega= e^{21} + e^{43}$. Its exterior derivative is given by 
$d \omega   =   ( q e^2 -e^4)  \wedge \omega$. Therefore, the Lee form is $\theta= q e^2 - e^4 $. The non-zero components of $d\theta$ are given by 
$$
(d \theta)_{[\lambda^{1,1}_0]} =  \tfrac{q}2 (e^{13} + e^{24}), \qquad (d \theta)_{\real{\lambda^{2,0}}} = \tfrac{q}2 (-e^{13} + e^{24}).    
$$
 Because $n=2$, the identities given in Proposition \ref{divergenciaunouno} are trivial. For dimension $4$, one has  $\mathcal{W}_1 \oplus \mathcal{W}_4 = \mathcal{W}_4$ and, in this example, $d \theta_{[\lambda^{1,1}_0]} \neq0$ and  $(d \theta)_{\real{\lambda^{2,0}}} \neq 0$. 
Finally, we have 
$$
s - s^* = 2 d^*\theta + \| \theta \|^2 =  1 + q^2, \qquad  (\Ric - \Ric^*)_{\mathbb{R}} = \tfrac{1+q^2}4 \langle \cdot , \cdot \rangle, \qquad    (\Ric - \Ric^*)_{[\lambda^{1,1}_0]} =0,
$$
$$
\Ric^*_{\lcf \lambda^{2,0}\rcf} = \tfrac12 (d \theta)_{\real{\lambda^{2,0}}} = \tfrac{q}4 (-e^{13} + e^{24}).
$$
If  $\ast$ is the Hodge star operator with respect to $\mathrm{Vol} = - \frac12 \omega^2= - e^{1234}$, $d^* \theta = - \ast d \ast \theta=  0$.
Now, we fix the complex volume form $\psi_+ + i \psi_-$, where $\psi_+ = e^{13} - e^{24}$ and $\psi_- = e^{14} + e^{23}$. For this $\Lie{SU}(2)$-structure, we have
$$
4  \eta   - \theta = \ast ( \ast d \psi_+ \wedge \psi_+ + \ast d\psi_- \wedge \psi_-) = - 2 e^4. 
$$
Then $\widehat{\eta} = J \eta =  \tfrac14 ( -q e^1 +  3e^3)$ and it is globally defined. Thus, $r_{\nabla^{\Lie{U}(2)}} = - 2 d \widehat{\eta} =  \tfrac{q}2 q e^{23}  +\tfrac{3+q^2}{2} e^{34}$ is exact. Hence, as it is expected by the existence of the $\Lie{SU}(2)$-structure, the first Chern class $c_1$  vanishes.  The Ricci form 
 of the Chern connection is given by 
$
r_{\nabla^{h}} = r_{\nabla^{\Lie{U}(2)}} + \tfrac12 d J\theta =   e^{34}$.
 By  Koszul's formula, the Levi Civita covariant derivatives are derived
\begin{gather*}
\nabla_{e_2} e_2 = -\nabla_{e_3} e_3 = e_4, \quad \nabla_{e_1} e_2 = \nabla_{e_2} e_1 = \tfrac{1}2 e_3, \quad  \nabla_{e_1} e_3 = \nabla_{e_3} e_1 = -\tfrac12  e_2+\tfrac12 q e_4,\\
 \nabla_{e_1} e_4 = \nabla_{e_4} e_1 = -\tfrac12 q e_3, \quad \nabla_{e_2} e_3 = - \nabla_{e_3} e_2 = - \tfrac12 e_1, \quad \nabla_{e_3} e_4  = -\tfrac{q}2 e_1 + e_3, \quad \nabla_{e_4} e_3 =  \tfrac{q}2 e_1 
\end{gather*}
and $\nabla_{e_i} e_j =0$, for the remaining pairs $(i,j)$. These are needed in  Proposition \ref{ricsigma} to obtain the following component of the Ricci tensor
$$
\Ric_{\lcf 	\sigma^{2,0} \rcf} = \tfrac12 ( e^1 \otimes e^1 - e^2 \otimes e^2 + e^3 \otimes e^3 - e^4 \otimes e^4).
$$

Since $r_{\nabla^{\Lie{U}(2)}[\lambda^{1,1}]} =  \tfrac{3+q^2}{2} e^{34} + \tfrac{q} 4 ( e^{23} -  e^{14})$ and we have all the information required in Proposition \ref{unomastresric} to compute  the following Kähler component of the curvature   
\begin{eqnarray*}
(\Ric + 3 \Ric^*)_{[\lambda^{1,1}]} & = & \tfrac{-3 + q^2}{4}  (e^1 \otimes e^1 +  e^2 \otimes e^2) - \tfrac{23 +11q^2}{4}  
( e^3\otimes e^3 +  e^4 \otimes e^4). 
\end{eqnarray*}
As a consequence (or by Corollary \ref{unomastrescor}), for the scalar curvatures it is obtained
$$
s+ 3 s^* = -13- 5 q^2, \qquad  s = - \tfrac{5+q^2}{2}, \qquad s^* = - \tfrac{7+ 3q^2}{2}.
$$

Since one has the identities $\rho_\nabla = r_\nabla = - J_{(2)} \Ricac$, we obtain
$$
 \rho_\nabla =  r_\nabla =  \Ricac(\cdot , J \cdot)= \tfrac14   e^{12} + \tfrac{8+ 3q^2}{4} e^{34} 
+ \tfrac{q}4  ( e^{14} + e^{23}). 
$$
This form is not closed, 
$
d \rho_\nabla = \tfrac{q}{2} e^{234} -  \tfrac14  e^{124} 
$.  Now  we use Proposition \ref{riccigeneral} (iii), to compute the first Ricci form of $\nabla^{\Lie{U}(2)}$ which is given by 
$$
 \rho_{\nabla^{\Lie{U}(2)}}  =   \tfrac{1-q^2}8 e^{12} + \tfrac{5(3+q^2)}{8}   e^{34} 
.
$$
It is  in $[\lambda^{1,1}]$ and  its exterior derivative is  $d \rho_{\nabla^{\Lie{U}(2)}}  =  \tfrac{q (1-q^2)}8  e^{234} -  \tfrac{1-q^2}8  e^{124}$. Likewise, by using  Proposition \ref{RicciChern} (ii),  the first Ricci form 
of  the Chern connection is given by 
$$
\rho_{\nabla^h} = - \tfrac{1+q^2}{2} e^{12} +  \tfrac{4+q^2}{2} e^{34} - \tfrac{q}{2}(e^{14} - e^{23}).
$$
It is also in $[\lambda^{1,1}]$ and has non-zero exterior derivative,  $d \rho_{\nabla^h}  = - \tfrac{q(2+q^2)}2
e^{234} + \tfrac{1+q^2}{2}  e^{124} \neq 0 $.
\begin{remark}
{\rm An alternative way to see that our structure is complex is the one indicated in \cite{Martin2} for four-dimensional cases. In such cases one has $\nabla \omega = \xi_+ \otimes \psi_+ + \xi_- \otimes \psi_-$  and the structure is Hermitian ($\mathcal W_4$) if and only if $J \xi_+ = \xi_-$. The advantage is that the one-forms $\xi_+$ and $\xi_-$ can be computed by means of exterior algebra using formulae in the mentioned reference. Thus $2\xi_+= - e_1 - q e_3$ and $2\xi_-= - e_2 - q e_4$. In fact, $J \xi_+ = \xi_-$ in our case.

On other hand, Gauduchon  in \cite{Gau}  claims that each conformal class of Hermitian metrics contains one metric, called {\it standard}, such that the corresponding Lee one-form $\theta$ is coclosed, $d^*\theta =0$. This is the case in the present example.
}
\end{remark}

}
 \end{example}

\begin{example} {\rm 
Let $\mathfrak{g}$ be the Lie algebra with structure equations 
\begin{equation*}
de^i=0, \; \, 1\le i\le4,
\qquad 
de^5=
e^{12},
\qquad
de^6=e^{14}+e^{23}.
\end{equation*} 
This Lie algebra has been included in the list, given by Salamon in \cite{Salamon},  of  real 6-dimensional nilpotent Lie algebras for which the corresponding Lie group $G$ has a left-invariant complex structure.  Because the nilpotent Lie group $G$  has  rational structure constants, there is a discrete subgroup $\Gamma$ such that $M = \Gamma \backslash
 G$ is a compact manifold \cite{Malcev}. 

On $M$ we consider the almost Hermitian structure such that the  metric is the left-invariant on defined by $\langle \cdot , \cdot \rangle = \sum_{i=1}^6 e_i \otimes e_i$ and  its Kähler form is given by 
$$
\omega = e^{6} \wedge e^5  +  (-\tfrac12 e^3 + \tfrac{\sqrt{3}}{2} e^4) \wedge e^1 +   (\tfrac12 e^4 + \tfrac{\sqrt{3}}{2} e^3) \wedge e^2.
$$ 
In \cite{AbbGarSal} it was shown that this structure is Hermitian (see page 162 of \cite{Salamon}, where $\omega = - c(\frac{2 \pi}{3})$ because of our notations). 
The exterior derivative is given by 
$
d \omega = e^{145}  +  e^{235}  - e^{126}.
$
The corresponding Lee form  is 
$
\theta = - \tfrac1{2} J d^*\omega =  \tfrac1{2} \langle \cdot \lrcorner d \omega, \omega \rangle = - \tfrac{\sqrt{3}}{2} e^5
$
Then the  exterior derivative  of $\theta$ is expressed  by 
$
d \theta = - \tfrac{\sqrt{3}}{2} e^{12}.
$
Then  the $\Lie{U}(3)$-components of this  two-form are given by 
$$
(d \theta)_{[\lambda^{1,1}_0]} =  - \tfrac{\sqrt{3}}{4} ( e^{12}  - e^{34}  ), \qquad (d \theta)_{\lcf \lambda^{2,0}\rcf} = - \tfrac{\sqrt{3}}{4} ( e^{12}  + e^{34} ).
$$

In order to see how  the identities of Proposition \ref{divergenciaunouno} work out  in this example, we will  compute the intrinsic torsion $\xi$.  In this case, because $N=0$,  it is given by 
$
4 \xi = (J_{(2)} + J_{(3)}) d \omega.
$
Since components of the exterior derivative $d\omega$ are expressed by 
\begin{eqnarray*}
(d \omega)_{\mathcal{W}_4} & = &  \theta  \wedge \omega = 
   -\tfrac{\sqrt{3}}{4} e^{135} + \tfrac34 e^{145} + \tfrac34 e^{235}  + \tfrac{\sqrt{3}}{4} e^{245},   
\\
(d \omega)_{\mathcal{W}_3} & = & d \omega - \theta   \wedge \omega = 
   - e^{612}  + \tfrac{\sqrt{3}}{4} e^{135} + \tfrac14   e^{145} + \tfrac14 e^{235} - \tfrac{\sqrt{3}}{4} e^{245},   
\end{eqnarray*} 
the corresponding components of $\xi$ are given by 
\begin{eqnarray*}
16 \xi_{(4)} & = & - 2\sqrt{3}  e^1 \otimes e^{15}  - 2\sqrt{3}   e^2 \otimes e^{25} 
 - 2\sqrt{3}    e^3 \otimes e^{36}  - 2\sqrt{3}   e^4 \otimes e^{45} 
\\
&&
- \sqrt{3} e^1 \otimes e^{36}   + 3  e^1 \otimes e^{46}  + 3  e^2 \otimes e^{36} + \sqrt{3} e^2 \otimes e^{46} 
 \\
 &&  
  + \sqrt{3}  e^3 \otimes e^{16}  - 3  e^3 \otimes e^{26}
 - 3  e^4 \otimes e^{16}   - \sqrt{3}  e^4 \otimes e^{26},  
\\
16 \xi_{(3)} & = &
+ 2  e^1 \otimes e^{25} - \sqrt{3}  e^1 \otimes e^{36} -   e^1 \otimes e^{46}
- 2  e^2 \otimes e^{15} + \sqrt{3}  e^2 \otimes e^{46} -   e^2 \otimes e^{36}
\\
&&
- \sqrt{3}  e^3 \otimes e^{16} -   e^3 \otimes e^{26} - 2  e^3 \otimes e^{45}  
-  e^4 \otimes e^{16} + \sqrt{3}  e^4 \otimes e^{26} + 2  e^4 \otimes e^{35}
\\
&&
-4e^5 \otimes e^{12} - 4 e^5 \otimes e^{34} -  2 \sqrt{3} e^6 \otimes e^{13} -   2  e^6 \otimes e^{14}
  - 2 e^6 \otimes e^{23} + 2  \sqrt{3} e^6 \otimes e^{24}.
\end{eqnarray*}
Now doing $\xi = \xi_{(3)} + \xi_{(4)}$, we obtain 
\begin{eqnarray*}
8 \xi & = &  - \sqrt{3}  e^1 \otimes e^{15}  - \sqrt{3}   e^2 \otimes e^{25} 
 - \sqrt{3}    e^3 \otimes e^{35}  - \sqrt{3}   e^4 \otimes e^{45}  
\\
&& 
+  e^1 \otimes e^{25} 
-  \sqrt{3}   e^1 \otimes e^{36}
+  e^1 \otimes e^{46}
-  e^2 \otimes e^{15}  
+  e^2 \otimes e^{36} 
+ \sqrt{3}   e^2 \otimes e^{46}
\\
&&
-2e^3 \otimes e^{26}  
-  e^3 \otimes e^{45}
- 2 e^4\otimes e^{16}
+  e^4 \otimes e^{35}
- 2 e^5 \otimes e^{12} - 2 e^5 \otimes e^{34}
\\
&&
  -   \sqrt{3} e^6 \otimes e^{13} -    e^6 \otimes e^{14}
  -  e^6 \otimes e^{23} +   \sqrt{3} e^6 \otimes e^{24}.
\end{eqnarray*}
From the non-zero Lie brackets: $[e_1, e_2]=-e_5$ and $[e_1 , e_4] =[e_2,e_3]=-e_6$, using Kozsul's formula, the Levi Civita connection is computed and  given by Table \ref{LevCiv}. The minimal connection is now obtained as  $\nabla^{\Lie{U}(3)} = \nabla +\xi$.
\begin{table}[htp]
\begin{center}
\begin{tabular}{ccccccc}
\hline
$\nabla$ & $e_1$ &  $e_2$ & $e_3$ & $e_4$ & $e_5$ & $e_6$  \\[1mm]
\hline
$e_1$ & $0$ &  $-\frac12 e_5$ & $0$ & $-\frac12 e_6$ & $\frac12 e_2$ & $\frac12 e_4$  \\[1mm]
\hline
$e_2$ & $\frac12 e_5$  &  $0$ & $-\frac12 e_6$ & $0$ & $-\frac12 e_1$ & $\frac12 e_3$  \\[1mm]
\hline
$e_3$ & $ 0$  &  $\frac12 e_6$ & $0$ & $0$ & $0$ & $-\frac12 e_2$  \\[1mm]
\hline
$e_4$ & $\frac12 e_6$  &  $0$ & $0$ & $0$ & $0$ & $-\frac12 e_1$  \\[1mm]
\hline
$e_5$  & $\frac12 e_2$ & $-\frac12 e_1$ & $0$ & $0$ & $0$ & $0$  \\[1mm]
\hline
$e_6$ & $\frac12 e_4$   &  $\frac12 e_3$ & $-\frac12 e_2$ & $-\frac12 e_1$ & $0$  & $0$  \\[1mm]
\hline
\end{tabular}\vspace{1mm}
\end{center}
\caption{Levi Civita connection $\nabla$}
\label{LevCiv}
\end{table}

Finally, from all of this, it is straightforward to compute
\begin{gather*}
\textstyle 16 \langle ( \nabla^{\Lie{U}(3)}_{e_i}  \xi_{(3)})_X Y , e_i \rangle  = \left(
  \tfrac{5}{2} \sum_{i=1}^4  e^i \otimes e^{i} 
   - 5 \sum_{i=5}^6  e^i \otimes e^{i} 
   + \tfrac{\sqrt{3}}{2}   (e^{12} - e^{34}) \right)(X,Y),
\\
\langle \xi_{(3)X} Y - \xi_{(3)Y} X ,  \xi_{(4)e_i} e_i  \rangle = - \tfrac{\sqrt{3}}{8} (e^{12} - e^{34})(X,Y),\\
\langle (\nabla^{\Lie{U}(3)}_{e_i} \xi_{(3)})_{e_i} X , Y \rangle =0,
\\
\langle \xi_{(3) \xi_{(4)e_i} e_i} X , Y \rangle = \tfrac{\sqrt{3}}8 (e^{12} + e^{34})(X,Y).
\end{gather*}
Hence one can see in this example how the identities given in Lemma \ref{lambdaunouno} are satisfied.

Now we use the identities \eqref{ricminusricacuno} and \eqref{difsca}  to compute
$$
\textstyle s-s^* = 3, \quad (\Ric - \Ricac)_\mathbb{R}  =\tfrac12 \langle \cdot , \cdot \rangle,  \quad (\Ric - \Ricac)_{[\lambda^{1,1}_0]} = - \tfrac14 \sum_{i=1}^4 e^i \otimes e^i + \tfrac12 \sum_{i=5}^6 e^i \otimes e^i.
$$
Propositions \ref{skewricac} and  \ref{ricsigma} are used to obtain 
$$
\Ricac_{\lcf \lambda^{2,0} \rcf} = (d \theta)_{\lcf \lambda^{2,0} \rcf} =  - \tfrac{\sqrt{3}}{4} ( e^{12}  + e^{34} ),  \qquad \Ric_{\lcf\sigma^{2,0}\rcf} =0.
$$
Next we fix $\Psi = \psi_+ + i \psi_-$, where 
$$
\psi_+   =  e^{125} +  e^{345} - \tfrac12 \left( e^{146} + e^{236}\right) + \tfrac{\sqrt3}{2} \left( e^{246}-e^{136}\right), 
$$
as complex volume form. Since $d\psi_+=0$, then $\eta  = \tfrac{1}{6} \ast(\ast d\psi_+ \wedge \psi_+) + \tfrac13 \theta = - \tfrac{\sqrt3}{6} e^5$. Hence $\hat{\eta} = J \eta =   -\tfrac{\sqrt3}{6} e^6$, $r_{\nabla^{\Lie{U}(3)}} = - 3 d\hat{\eta} = \tfrac{\sqrt3}{2} (e^{14}+ e^{23})$ and $r_{\nabla^h}=0$. Therefore, $r_{\nabla^{\Lie{U}(3)} [\lambda^{1,1}]}(X,JY) = \tfrac34 \sum_{i=1}^4 e^i \otimes e^{i}$. This with 
$
16  \textstyle \langle \xi_{(3)X} e_i , \xi_{(3)Y} e_i \rangle = \sum_{i=1}^4 e^i \otimes e^{i} 
$
and the above considerations complete the ingredients needed to compute, by Proposition \ref{unomastresric}, 
$$
 \textstyle(\Ric + 3 \Ricac)_{\mathbb{R}}  =   -\tfrac{11}{6} \langle \cdot , \cdot \rangle, \qquad 
(\Ric + 3 \Ricac)_{[\lambda^{1,1}_0]}   =   -\tfrac{17}{12} \sum_{i=1}^4 e^i \otimes e^{i}
+ \tfrac{17}{6} \sum_{i=5}^6 e^i \otimes e^{i}. 
$$
Since we have already $s-s^*=3$, it follows
$$
s+3s^* = -11, \qquad s=-\tfrac12, \qquad s^*= - \tfrac72.
$$

From the fact, $\rho_\nabla = r_\nabla = - J_{(2)} \Ricac$, we obtain  
$$
\rho_{\nabla [\lambda^{1,1}]} = \tfrac1{4} e^{65}-\tfrac1{4} \omega, \qquad 
 \rho_{\nabla \lcf \lambda^{2,0}\rcf} = \tfrac{\sqrt{3}}{2} (e^{14} + e^{23}).
$$
Note that $\rho_{\nabla}$ is closed.
Finally, for sake of completeness, we use Proposition \ref{riccigeneral} (iii) to compute the first Ricci form of $\nabla^{\Lie{U}(3)}$ which is given by 
$
 \rho_{\nabla^{\Lie{U}(3)}}    =  e^{65} - \tfrac{3}{4} \omega$.
This Ricci form  is in $[\lambda^{1,1}]$ but it is not closed.  In fact, $ d \rho_{\nabla^{\Lie{U}(3)}} = \tfrac{1}{4} (e^{145} + e^{235} - e^{126}) =   \tfrac{1}{4} d \omega \neq 0$. Likewise, by using  Proposition \ref{RicciChern} (ii),  the first Ricci form 
of the Chern connection is obtained, 
$ \rho_{\nabla^h}  = \tfrac12 e^{65}
-  \tfrac{1}{2} \omega$.  
This form $\rho_{\nabla^h}$ is also in  $[\lambda^{1,1}]$ and  is closed,    $d \rho_{\nabla^h}= 0$.
}
\end{example}

\begin{remark}
{\rm By Proposition \ref{w1w4}, for an almost Hermitian manifold of type $\mathcal{W}_1 \oplus \mathcal{W}_4$ with dimension $2n >4$, the $[\lambda^{1,1}_0]$ component of $d \theta$ vanishes. It would interesting to find an example of the mentioned  type with non-vanishing  $\lcf \lambda^{2,0} \rcf$ component of $d \theta$. Such an example must be of dimension $2n>6$. Note that the type $\mathcal{W}_1 \oplus \mathcal{W}_4$ is specially rigid.	}
\end{remark}


\begin{thebibliography}{99}

\setlength{\baselineskip}{0.4cm}



\bibitem{AbbGarSal} E.~Abbena, A.~Garbiero  and S.~Salamon,  Almost Hermitian Geometry on Six Dimensional Nilmanifolds,  \emph{ Ann. Scuola Norm. Sup. Pisa Cl. Sci.} (4) 30 (2001), no.1, 147--170.

\bibitem{CleytonSwann:torsion}
R.~Cleyton and A.~F.~Swann, {E}instein metrics via intrinsic or
parallel torsion, \emph{Math. Z.} 247 no.~3 (2004), 513--528.


\bibitem{Falcitelli-FS:aH}
M.~Falcitelli,
A.~Farinola,
S.~M.~Salamon,
 \textrm{Almost-{H}ermitian geometry},
\emph{Differential Geom. Appl.} 4 (1994),
    259-- 282.



\bibitem{GDMC}
J.~C.~González Dávila and F.~Mart{\'\i}n~Cabrera, {\rm Harmonic
$G$-structures}, \emph{Math. Proc. Cambridge Phil. Soc.} 146 (2009), no. 2, 435-459.
\url{arXiv:math.DG/0706.0116}.


\bibitem{Gau}
P.~Gauduchon,  La 1-forme de torsion d'une variété hermitienne compacte, \emph{Math. Ann.} 267 no. 4  (1984), 495--518. 

\bibitem{Graaf}
W.~A.~de Graaf,
\newblock \textrm{Clasification of Solvable Lie Algebras},
\newblock {\em Experiment. Math.} 14 no. 1 (2005), 15--25.

 


\bibitem{Gray-H:16}
A.~Gray and L.~M.~Hervella, \textrm{The sixteen classes of almost
{H}ermitian manifolds and their linear invariants}, \emph{Ann. Mat.
Pura Appl.} (4) \textrm{123} (1980), 35--58.

\bibitem{Hasegawa}
K.~Hasegawa, \textrm{Complex and Kähler structures on compact solvmanifolds}, Conference on Symplectic Topology, \emph{J. Symplectic Geom.} 3 (2005), no. 4, 749-767.

\bibitem{Inoue}
M.~Inoue,  \textrm{On surfaces of Class $\mathrm{VII}_{0}$}, \emph{Invent. Math.} 24 (1974), 269-310. 

\bibitem{Lee}
 H.~C.~Lee, A kind of even-dimensional differential geometry and its application to exterior calculus, \textit{Amer. J. Math.} 65 (1943),  433--438.

\bibitem{Malcev}
A.~I.~ Malcev, \textrm{On a class of homogeneous spaces}, reprinted in \emph{Amer. Math. Soc. Translations}
Series 1, 9 (1962) 276--307.




\bibitem{Martin2}
F.~Martín Cabrera, Special almost Hermitian geometry, \emph{J. Geom. Phys.} 55 (2005), no. 4, 450--470.

\bibitem{Martin3}
F.~Martín Cabrera and A.~Swann, Curvature of special almost Hermitian manifolds, \textit{Pacific J. Math.} 228 (2006), no. 1, 165--184





\bibitem{Salamon} 
S.~Salamon, Complex structures  on nilpotent Lie algebras, 
\textit{J. Pure Appl. Algebra}
Vol. 157  No. 2--3 (2001),  311--333.

\bibitem{Vais} 
I.~Vaisman, Some curvature properties of locally conformal Kähler manifolds, 
\textit{Trans. Amer. Math.  Soc.} 
 259  (1980),  439--447.
 
 \bibitem{Tricerri-Vanhecke:aH}
F.~Tricerri and L.~Vanhecke, \emph{Curvature tensors on almost {H}ermitian
  manifolds}, Trans. Amer. Math. Soc. \textbf{267} (1981), 365--398.


\end{thebibliography}
\end{document}